\tikzset{%
pics/bullet/.style args={#1,#2}{
    code={
    \node at (0,1) [draw,circle,fill,minimum size=2mm,inner sep=0pt,label={$#1$}](#1){};
    \node at (2,1) [draw,circle,fill,minimum size=2mm,inner sep=0pt,,label={$#2$}](#2){};
    \node at (1,0) [draw,circle,fill,minimum size=2mm,inner sep=0pt](bb){};
    \draw (bb)--(#1) (bb)--(#2);
    }}}
\lstdefinestyle{mystyle}{
    backgroundcolor=\color{backcolour},   
    commentstyle=\color{codegreen},
    keywordstyle=\color{magenta},
    numberstyle=\tiny\color{codegray},
    stringstyle=\color{codepurple},
    basicstyle={
    \fontencoding{T1}\footnotesize\fontfamily{phv}\fontseries{m}\selectfont},
    breakatwhitespace=false,         
    breaklines=true,                 
    captionpos=b,                    
    keepspaces=false,                 
    numbers=left,                    
    numbersep=5pt,                  
    showspaces=false,                
    showstringspaces=false,
    showtabs=false,                  
    tabsize=2
}
\definecolor{codegreen}{rgb}{0,0.6,0}
\definecolor{codegray}{rgb}{0.5,0.5,0.5}
\definecolor{codepurple}{rgb}{0.58,0,0.82}
\definecolor{backcolour}{rgb}{0.95,0.95,0.92}
\theoremstyle{plain}
\newtheorem{theorem}{Theorem}[section] 
\newtheorem{proposition}[theorem]{Proposition}
\newtheorem{lemma}[theorem]{Lemma}
\newtheorem{conj}[theorem]{Conjecture}
\theoremstyle{remark}
\newtheorem{remark}[theorem]{Remark} 
\newtheorem{example}[theorem]{Example}
\def\A{\mathsf{A}}
\def\D{\mathsf{D}}
\def\lra{\longrightarrow}
\newcommand{\Ind}{\mathsf{Ind}}
\newcommand{\Res}{\mathsf{Res}}
\newcommand{\Aut}{\mathsf{Aut}}
\newcommand{\End}{\mathsf{End}}
\newcommand{\Id}{\mathsf{Id}}
\newcommand{\id}{\mathsf{id}}
\newcommand{\Hom}{\mathsf{Hom}}
\newcommand{\Mod}{\mathsf{mod}}
\newcommand{\bimod}{\mathsf{bimod}}
\newcommand{\GA}[1]{\mathbb{C}\A_{#1}}
\newcommand{\GZ}[1]{\mathbb{C}Z_{#1}}
\newcommand{\orbit}{\mathcal{O}}
\newcommand{\orbitsum}{\mathfrak{o}}
\newcommand{\vast}{\bBigg@{4.15}}
\newcommand{\Vast}{\bBigg@{6}}
\begin{document}

\title[Natural transformations on iterated symmetric group products]{Natural transformations between induction and restriction on iterated wreath product of symmetric group of order $2$}
\author{Mee Seong Im}
\address{Department of Mathematical Sciences, United States Military Academy, West Point, NY 10996, USA}
\curraddr{Department of Mathematics, United States Naval Academy, Annapolis, MD 21402, USA}
\email[corresponding author]{meeseongim@gmail.com}

\author{Can Ozan O\u{g}uz}
\address{Institute of Information Technologies, Gebze Technical University, \.{I}stanbul, Turkey}
\email{canozanoguz@gmail.com} 
\date{October 9, 2022}

\makeatletter
\@namedef{subjclassname@2020}{%
  \textup{2020} Mathematics Subject Classification}
  
\makeatother

\subjclass[2020]{
Primary: 16G99, 
20C30. 
Secondary: 18N25, 
20B35. 
}

\providecommand{\keywords}[1]{\textbf{\textit{Key words and phrases.}} #1}

\begin{abstract}  
    Let $\mathbb{C}\A_n = \mathbb{C}[S_2\wr S_2 \wr\cdots \wr S_2]$ be the group algebra of $n$-step iterated wreath product. We prove some structural properties of $\A_n$ such as their centers, centralizers, right and double cosets. We apply these results to explicitly write down Mackey theorem for groups $\A_n$ and give a partial description of  the natural transformations between induction and restriction functors on the representations of the iterated wreath product tower by computing certain hom spaces of the category of $\displaystyle \bigoplus_{m\geq 0}(\A_m, \A_n)-$bimodules. A complete description of the category is an open problem.
\end{abstract}

\keywords{Heisenberg categories, categorification, Frobenius algebras, iterated wreath product algebras}
\maketitle
\tableofcontents
\bibliographystyle{amsalpha}

\section{Introduction}
    Representation theory of symmetric groups is a classical and rich subject. The study of their irreducible representations has been initiated by Schur (cf. \cite{MR1820589}). Today we have a well-developed theory of representations of symmetric groups in characteristic zero. Symmetric groups form a tower of groups 
    \[ 
    S_0\longhookrightarrow S_1 \longhookrightarrow S_2 \longhookrightarrow \ldots \longhookrightarrow S_n \longhookrightarrow \dots
    \]  
    via the inclusions $S_n\longhookrightarrow S_{n+1}$, $s_i=(i, i+1) \mapsto s_i$, for example. Hence once the representation category of symmetric groups is understood, one can study induction and restriction functors on these categories.
   
   The functors $\Ind$ and $\Res$ are biadjoint, and are related through Mackey's theorem. In 2010, using this relation Khovanov described a category (known as Heisenberg category) which governs the natural transformations between induction and restriction functors between representations of the symmetric groups (cf. \cite{MR3205569}).
   
   Today, we have more general Heisenberg categories, which are quantized, have a central charge $c$, and depend on a choice of a graded Frobenius superalgebra $F$, where $\{F\wr S_n\}_{n\in \mathbb{N}}$ play the role of the symmetric groups in Khovanov's construction, see \textit{e.g.}, 
   \cite{MR3648512,Savage-Frob-Heisenberg, Brundan_2018, brundan2019degenerate, Brundan_2020, brundan2021foundations} . 
    
    In this paper, we investigate induction and restriction functors arising from a different tower, namely the tower of iterated wreath products of symmetric groups of order two. Intuitively, we work with the tower $\A_0 \longhookrightarrow \A_1 \longhookrightarrow \A_2 \longhookrightarrow \ldots$, where $\A_0=\{\id\}$, $\A_1=S_2$ and $\A_n=S_2\wr \A_{n-1}$ for $n\geq 2$ and study the categories $\mathcal{A}_n$ with objects $\displaystyle \bigoplus_{m\geq 0}(\A_m, \A_n)-$bimodules and with morphisms bimodule homomorphisms. Induction and restriction functors can be identified with tensoring with certain bimodules, therefore the morphisms in the category $\mathcal{A}_n$ correspond to natural transformations between these functors. We will denote by $\mathcal{C}_n$ the $\mathbb{C}$-linear additive category whose objects are generated by compositions of induction and restriction functors between groups $\A_k$, which start from $\A_n$. Morphisms of $\mathcal{C}_n$ are natural transformations between the induction and restriction functors.

    Since every finite-dimensional group algebra is a Frobenius algebra, the tower $\{\A_n\}_{n\in \mathbb{N}}$ consists of Frobenius algebras. However, this tower is fundamentally different than the towers in Savage's papers in that we do not take wreath products with symmetric groups increasing in size. Instead, we are iterating the wreath product.
    
    We mention some other work related to the iterated wreath products for the interested reader. In \cite{OOR}, the authors consider the iterated wreath products of $\mathbb{Z}/n\mathbb{Z}$ and describe a correspondence between their irreducible representations and certain rooted trees. Such a correspondence allows them to determine the Bratelli diagram for their tower of groups. In \cite{IW-cyclic}, first author and Angela Wu generalize these results to iterated wreath products of cyclic groups of various orders. As a follow up, in \cite{IW-symm} they generalize their results to iterated wreath products of symmetric groups of various sizes. Our focus is not the representations themselves but induction and restriction between them, as well as natural transformations between induction and restriction.
    In \cite{Im-Khovanov-Rozansky}, the authors introduce a notion of foams and relations on them to interpret functors and natural transformations in categories of representations of certain iterated wreath products, leading towards a discussion of patched surfaces with certain defects and their connections to separable field extensions and Galois theory. 
    
    In order to describe the category $\mathcal{C}_n$ or $\mathcal{A}_n$, one needs to establish certain structural properties of the groups $\A_n$. 
    In Section~\ref{section:iterated-wreath-products}, we provide background on rooted trees and its connection to the construction of iterated wreath products.  
    In Section~\ref{section:structural-properties}, we prove results about the right cosets, centers, centralizer algebras, and double cosets of $\A_n$'s. In Section~\ref{section:natural-transformation-ind-res}, 
    we consider $\Hom_{\mathcal{A}_n}({\Ind^{k_1}\Res^{l_1}, \Ind^{k_2}\Res^{l_2}})$, where $k_1-l_1=k_2-l_2$, $l_1\leq n$ and $l_2\leq n$. We provide an explicit vector space basis for $\End_{\mathcal{A}_n}(\Ind^{k}\Res^{l})$ in Proposition~\ref{prop:vs-basis-general}, and we construct a generating set of $\End_n(\Ind^k)$ in Theorem~\ref{theorem:alg-gens}. 
    In Section~\ref{section:future-direction}, we conclude by discussing many interesting open problems, in particular, in connection to the Heisenberg categories.


\section{Rooted trees and iterated wreath products}
\label{section:iterated-wreath-products}
    Iterated wreath products of cyclic groups or permutation groups can be seen as automorphism groups of certain rooted trees (cf. \cite{IW-cyclic,IW-symm,OOR,Im-Khovanov-Rozansky}).
    A {\color{purple}\it rooted tree} is a connected simple graph with no cycles and with a distinguished vertex, which is called a {\color{purple}\it root}. A vertex is in the $j$-th level of a rooted tree if it is at distance $j$ from the root. The {\color{purple}\it branching factor of a vertex} is its number of children, and a {\color{purple}\it leaf} is a vertex with branching factor zero. 
    
    A {\color{purple}\it complete binary tree $T_n$ 
    of height $n$} is a rooted tree with all of its leaves at level $n$, and whose vertices that are not leaves all have branching factor $2$.
    
    A {\color{purple}\it labeled complete binary tree} is a complete binary tree whose children are labeled $1,2,\ldots, 2^n$. Without loss of generality, we will also denote a labeled tree by $T_n$.

    
    The wreath product $\A_n := S_2\wr\cdots \wr S_2$ of the group $\A_{n-1}$ with $S_2$ is $(\A_{n-1}\times \A_{n-1})\times S_2$ with the multiplication given by  $(a;\sigma)(b;\lambda)= (ab^{\sigma};\sigma\lambda)$, where $b^{\sigma}$ corresponds to swapping the components of an element $b\in \A_{n-1}\times \A_{n-1}$ if $\sigma$ is not the identity of $S_2$, and $b^\sigma=b$ if $\sigma$ is the identity of $S_2$. The symmetric group of degree $2^n$ has as its Sylow $2$-subgroup the $n$-step iterated wreath product of $S_n$.
    Writing $e'$ to be the identity element in the wreath product $\A_{n-1}$ and $e$ to be the identity element in $S_2$, the group $S_2\cong \{(e';\sigma): \sigma\in S_2\}$ and $\A_{n-1}\cong \{(a;e): a\in \A_{n-1} \}$. This identification shows that the wreath product is precisely the semi-direct product of $\A_{n-1}$ by $S_2$. 
    
    Another way to think of the wreath product is through automorphism groups of rooted trees. An automorphism of a tree $T_n$ with vertex set $V$ is a bijection $f:V\lra V$ such that $u,v\in V$ are {\color{purple}\it adjacent} if and only if $f(u)$ and $f(v)$ are adjacent. These automorphisms of $T_n$ form a group under composition. So the wreath product $S_2\wr\cdots \wr S_2$ of the group $\A_{n-1}$ with $S_2$ may be identified with the automorphism group of complete binary tree of height $n$ that permutes the $2$ children of each node. 
 
    That is, let $\A_1=S_2$ and $\A_n= \A_{n-1}\wr S_2$ for $n\geq 2$. The iterated wreath product $\A_n$ may be seen as the automorphism group of $T_n$, since $\A_{n-1}\wr S_2$ means take two copies of $\A_{n-1}$ and permute them. One can see the two branches coming out of the root of $T_n$ as two copies of the tree $T_{n-1}$, and an automorphism of $T_n$ can leave those two branches fixed, or swap them (which then becomes the action of $S_2$ on two copies of the wreath product $\A_{n-1}$).
    
    We will denote the root of a rooted tree with an empty circle. See, \textit{e.g.},
    \[ 
    \A_2 \simeq \Aut(T_2) = \Aut\Bigg(\quad 
    \raisebox{10pt}{\xymatrix@-1.65pc{
    *={\color{blue}{\bullet}\save+<0ex,2ex>\restore}
    & &
    *={\color{blue}{\bullet}\save+<0ex,2ex>\restore}
    & &  
    *={\color{blue}{\bullet}\save+<0ex,2ex>\restore}
    & & 
    *={\color{blue}{\bullet}\save+<0ex,2ex>\restore} 
    \frm{}
    \\
    & \ar@{-}@[blue][lu]
    *={} \save+<0ex,0ex>*={\bullet} \restore
    \ar@{-}@[blue][ru]
    & & & & 
    \ar@{-}@[blue][lu]
    *={\color{blue}{\bullet}}  \save+<0ex,0ex>*={\color{blue}{\bullet} \restore}
    \ar@{-}@[blue][ru] 
    & 
    \frm{}\\
    & & &\ar@{-}@[blue][llu] 
    *={\color{blue}{\bullet}} \save+<0ex,0ex>*={\color{blue}\circ} \restore
    \ar@{-}@[blue][rru]& & & 
    \frm{} \\ 
    }} \quad \Bigg).
    \]
    
    With the tree picture in mind, there are two natural choices for embedding the wreath product $\A_n$ into $\A_{n+1}$. One can either add new branches to the top of a tree, or one can take two copies of a tree of height $n$ and connect them through a new root. 
    
    The first one preserves the root of the tree, hence we call it \emph{\color{purple}tree embedding} of the wreath product and denote it as $i_T:\A_n  \longhookrightarrow \A_{n+1}$.
    For example, 
    \[ 
    \textcolor{blue}{\A_2} \stackrel{i_T}{\longhookrightarrow} \A_3 \simeq \Aut(T_3) = \Aut\Bigg( \hspace{0.15cm}
    \raisebox{20pt}{\xymatrix@-1.25pc{
    *={{\bullet} \save+<0ex,2ex> \restore}
    & & 
    *={{\bullet} \save+<0ex,2ex> \restore}
    & & 
    *={{\bullet} \save+<0ex,2ex> \restore}
    & &
    *={{\bullet} \save+<0ex,2ex> \restore}
    & &
    *={\bullet} \save+<0ex,2ex> \restore
    & &
    *={\bullet} \save+<0ex,2ex> \restore
    & &
    *={\bullet} \save+<0ex,2ex> \restore
    & &
    *={\bullet} \save+<0ex,2ex> \restore \frm{}
     \\
    &   \ar@{-}[ul] 
    *={{\bullet}} \save+<0ex,0ex>*={\color{blue}\bullet} \restore
    \ar@{-}[ur]
    & & & & \ar@{-}[ul] 
    *={\bullet} \save+<0ex,0ex>*={\color{blue}\bullet} \restore
    \ar@{-}[ur]  & & & & \ar@{-}[ul] 
    *={\bullet} \save+<0ex,0ex>*={\color{blue}\bullet \restore}
    \ar@{-}[ur] & & & & \ar@{-}[ul] 
    *={\bullet} \save+<0ex,0ex>*={\color{blue}\bullet \restore}
    \ar@{-}[ur] & 
    \frm{}  \\
    & & & \ar@{-}@[blue][ull] 
    *={\color{blue}{\bullet}} \save+<0ex,0ex>*={\color{blue}\bullet} \restore
    \ar@{-}@[blue][urr]
    & & & & & & & & 
    \ar@{-}@[blue][ull]
    *={\color{blue}\bullet} \save+<0ex,0ex>*={\color{blue}\bullet \restore}
    \ar@{-}@[blue][urr] & & & \frm{} \\
    & & & & & & & \ar@{-}@[blue][ullll] 
    *={\color{blue}{\bullet}} \save+<0ex,0ex>*={\color{blue}\circ} \restore
    \ar@{-}@[blue][urrrr]& & & & & & &  \\ 
    }} \hspace{0.15cm} \Bigg)
    \] 
    
    The second one, although it does not preserve the root of $T_n$, is more convenient to work with when one wants to identify elements of $\A_n$ with permutations of the leaves. If we label the leaves of a tree from $1$ to $2^n$, then each tree automorphism can be seen as a permutation in $S_{2^n}$. In the second embedding the expression of a permutation and its image are the same in cycle notation, since the labeling of the leaves is preserved. Hence this embedding will be called \emph{\color{purple}permutation embedding} and will be denoted as $i_p:\A_n \longhookrightarrow \A_{n+1}$. For example, 
    \[ 
    \textcolor{blue}{\A_2} \stackrel{i_p}{\longhookrightarrow} \A_3 \simeq \Aut(T_3) = \Aut\Bigg( \quad
    \raisebox{20pt}{\xymatrix@-1.5pc{
    *={\color{blue}{\bullet} \save+<0ex,2ex> \restore}
    & & 
    *={\color{blue}{\bullet} \save+<0ex,2ex> \restore}
    & & 
    *={\color{blue}{\bullet} \save+<0ex,2ex> \restore}
    & &
    *={\color{blue}{\bullet} \save+<0ex,2ex> \restore}
    & &
    *={\bullet} \save+<0ex,2ex> \restore
    & &
    *={\bullet} \save+<0ex,2ex> \restore
    & &
    *={\bullet} \save+<0ex,2ex> \restore
    & &
    *={\bullet} \save+<0ex,2ex> \restore \frm{}
     \\
    &   \ar@{-}@[blue][ul] 
    *={\color{blue}{\bullet}} \save+<0ex,0ex>*={\color{blue}{\bullet} \restore}
    \ar@{-}@[blue][ur]
    & & & & \ar@{-}@[blue][ul] 
    *={\color{blue}{\bullet}} \save+<0ex,0ex>*={\color{blue}{\bullet} \restore}
    \ar@{-}@[blue][ur]  & & & & \ar@{-}[ul] 
    *={\color{black}{\bullet}} \save+<0ex,0ex>*={\bullet} \restore
    \ar@{-}[ur] & & & & \ar@{-}[ul] 
    *={\bullet} \save+<0ex,0ex>*={\bullet} \restore
    \ar@{-}[ur] & 
    \frm{}  \\
    & & & \ar@{-}@[blue][ull] 
    *={\color{blue}{\bullet}} \save+<0ex,0ex>*={\bullet} \restore
    \ar@{-}@[blue][urr]
    & & & & & & & & 
    \ar@{-}[ull]
    *={\bullet} \save+<0ex,0ex>*={\bullet} \restore
    \ar@{-}[urr] & & & \frm{} \\
    & & & & & & & \ar@{-}[ullll] 
    *={\color{blue}{\bullet}} \save+<0ex,0ex>*={\circ} \restore
    \ar@{-}[urrrr]& & & & & & &  \\ 
    }} \quad \Bigg).
    \]

    Automorphisms of $T_n$ are determined by whether two children of each vertex are swapped or not. We will represent these swaps by dotted arrows on the diagram of a rooted tree. When reading a tree automorphism from the tree diagram, we apply the swaps from the bottom-most layer to the top-most layer, where the root of the tree is considered to be at the bottom. 
    
    From this point forward, we will work with the permutation embedding, and write $i:= i_p$. 
  
    \begin{example}\label{ex:A1-autom-tree}  
    The wreath product $\A_1 = S_2 = \{ e, (1,2)\}$ 
    can be thought of as the set of automorphisms of the tree 
\begin{center}
{\Large $T_1$ $:$} $\quad$ 
$\xymatrix@-1.25pc{
   *={\bullet}\save+<0ex,2ex>*={1}\restore  &  &*={\bullet}\save+<0ex,2ex>*={2}\restore&  
   \frm{}\\
   &\save+<0ex,0ex>*={\circ} \restore \ar@{-}[lu] \ar@{-}[ru]& & 
 }$
\end{center}
   where the vertices of the top layer are labeled $1$ and $2$ when reading from left to right. 
    That is, there is an isomorphism $\A_1\lra \Aut(T_1)$ of groups, 
    where 
\begin{align*}
   e &\mapsto  
\xymatrix@-1.25pc{
   *={\bullet}\save+<0ex,2ex>*={1}\restore  & &*={\bullet}\save+<0ex,2ex>*={2}\restore &  
   \frm{}\\
   &\save+<0ex,0ex>*={\circ} \restore \ar@{-}[lu] \ar@{-}[ru]& & \frm{} \\ 
 }:=\quad 
     \left( 
    \xymatrix@-1.25pc{
   *={\bullet}\save+<0ex,2ex>*={1}\restore  &  
   &*={\bullet}  \save+<0ex,2ex>*={2}\restore&  
   \frm{}\\
   &\save+<0ex,0ex>*={\circ} \restore \ar@{-}[lu] \ar@{-}[ru]& & \frm{} \\
    }
    \mapsto  \quad 
\xymatrix@-1.25pc{
   *={\bullet}\save+<0ex,2ex>*={1}\restore  &  &*={\bullet}\save+<0ex,2ex>*={2}\restore&  
   \frm{}\\
   &\save+<0ex,0ex>*={\circ} \restore  \ar@{-}[lu] \ar@{-}[ru]& & \frm{} \\
 }\right),  \\ 
    (1,2) &\mapsto 
        \xymatrix@-1.25pc{
   *={\bullet}\save+<0ex,2ex>*={1}\restore \ar@{<..>}@/^0.5pc/[rr] &  
   &*={\bullet}  \save+<0ex,2ex>*={2}\restore&  
   \frm{}\\
   &\save+<0ex,0ex>*={\circ} \restore \ar@{-}[lu] \ar@{-}[ru]& & \frm{} \\
    } := 
    \quad 
    \left( 
    \xymatrix@-1.25pc{
   *={\bullet}\save+<0ex,2ex>*={1}\restore  &  
   &*={\bullet}  \save+<0ex,2ex>*={2}\restore&  
   \frm{}\\
   &\save+<0ex,0ex>*={\circ} \restore \ar@{-}[lu] \ar@{-}[ru]& & \frm{} \\
    }
    \mapsto  \quad 
\xymatrix@-1.25pc{
   *={\bullet}\save+<0ex,2ex>*={2}\restore  &  &*={\bullet}\save+<0ex,2ex>*={1}\restore&  
   \frm{}\\
   &\save+<0ex,0ex>*={\circ} \restore  \ar@{-}[lu] \ar@{-}[ru]& & \frm{} \\
 }\right). \\ 
\end{align*}
\end{example}

\begin{example}\label{ex:A2-autom-tree} 
    Consider $\A_2=S_2\wr S_2=(S_2\times S_2)\rtimes S_2=\{(a,b;c)|(a,b)\in S_2\times S_2, c\in S_2\}$. Given the complete binary tree 
\begin{center}

{\Large $T_2:$ $\qquad$} 
    $\xymatrix@-1.25pc{
    *={\bullet}\save+<0ex,2ex>*={1}\restore
    & &
    *={\bullet}\save+<0ex,2ex>*={2}\restore
    & &  
    *={\bullet}\save+<0ex,2ex>*={3}\restore
    & & 
    *={\bullet}\save+<0ex,2ex>*={4}\restore 
    \frm{}
    \\
    &  \ar@{-}[lu]
    *={\bullet} \save+<0ex,0ex>*={\bullet} \restore
    \ar@{-}[ru] 
    & & & & 
    \ar@{-}[lu]
    *={\bullet} \save+<0ex,0ex>*={\bullet} \restore
    \ar@{-}[ru] 
    & 
    \frm{}\\
    & & &\ar@{-}[llu] 
    \save+<0ex,0ex>*={\circ} \restore  \save+<0ex,0ex>*={\circ} \restore
    \ar@{-}[rru]& & & 
    \frm{} \\ 
    }$, 
\end{center}
    the map $\A_2\stackrel{\simeq}{\lra}\Aut(T_2)$ is a group homomorphism, 
    where the isomorphism is indicated via sending 
    \begin{align*}
    ((1,2),e;e)\mapsto 
        &\xymatrix@-1.5pc{
    *={\bullet} \save+<0ex,2ex>*={1} \restore 
    \ar@{<..>}@/^0.25pc/[rr]& &
    *={\bullet} \save+<0ex,2ex>*={2} \restore 
    & &  
    *={\bullet} \save+<0ex,2ex>*={3} \restore
    & & 
    *={\bullet} \save+<0ex,2ex>*={4} \restore 
    \frm{} \\
    &  \ar@{-}[lu]
    *={\bullet} \save+<0ex,0ex>*={\bullet} \restore
    \ar@{-}[ru] 
    & & & & 
    \ar@{-}[lu]
    *={\bullet} \save+<0ex,0ex>*={\bullet} \restore
    \ar@{-}[ru] 
    & \frm{} \\
    & & &\ar@{-}[llu] 
    *={\circ} \save+<0ex,0ex>*={\circ} \restore
    \ar@{-}[rru]& & & \frm{} \\ 
    } 
    \quad \mbox{ and }
    \quad 
    (e,e;(1,2))\mapsto 
    \xymatrix@-1.5pc{
    *={\bullet} \save+<0ex,2ex>*={1} \restore
    & &
    *={\bullet} \save+<0ex,2ex>*={2} \restore
    & &  
    *={\bullet} \save+<0ex,2ex>*={3} \restore
    & & 
    *={\bullet} \save+<0ex,2ex>*={4} \restore 
    \frm{} \\
    &  \ar@{-}[lu]
    *={\bullet} \save+<0ex,0ex>*={\bullet} \restore
    \ar@{-}[ru] \ar@{<..>}@/^.25pc/[rrrr]
    & & & & 
    \ar@{-}[lu]
    *={\bullet} \save+<0ex,0ex>*={\bullet} \restore
    \ar@{-}[ru] 
    & 
    \frm{} \\
    & & &\ar@{-}[llu] 
    \save+<0ex,0ex>*={\circ} \restore  \save+<0ex,0ex>*={\circ} \restore
    \ar@{-}[rru]& & & \frm{} \\ 
    }. 
    \end{align*}

    \begin{figure}
    \centering
\begin{tikzpicture}[scale=0.6]
\draw[thick] (0.1,0.1) -- (2,2);    
\draw[thick] (-.1,0.1) -- (-2,2);
\draw[thick] (2,2) -- (3.55,3.8);
\draw[thick] (2,2) -- (0.45,3.8);
\draw[thick] (-2,2) -- (-3.55,3.8);
\draw[thick] (-2,2) -- (-0.45,3.8);
\draw[thick] (0.15,0) arc (0:360:1.5mm); 
\draw[thick,fill] (2.15,2) arc (0:360:1.5mm); 
\draw[thick,fill] (-1.85,2) arc (0:360:1.5mm); 

\draw[thick,fill] (3.65,3.8) arc (0:360:1.5mm); 
\draw[thick,fill] (-3.4,3.8) arc (0:360:1.5mm); 
\draw[thick,fill] (0.6,3.8) arc (0:360:1.5mm); 
\draw[thick,fill] (-.3,3.8) arc (0:360:1.5mm); 

\node at (-3.55,4.3) {\Large $1$};
\node at (-.47,4.3) {\Large $2$};
\node at (.45,4.3) {\Large $3$};
\node at (3.5,4.3) {\Large $4$};

\draw[thick,blue] (4,3.6) arc (0:360:2);
\end{tikzpicture}
    \caption{The embedding of $\A_1\stackrel{i}{\longhookrightarrow} \A_2$ fixes the circled branch. }
    \label{fig2_1}
\end{figure}    
    
    We thus have an embedding of $i:\A_1\longhookrightarrow \A_2$, where $i(\A_1)$ maps to the automorphisms of $T_2$ that fixes vertices $3$ and $4$, \textit{i.e.}, the right branch in the blue circle is fixed. See Figure~\ref{fig2_1}. 
    This gives us a map 
    $\A_1\longhookrightarrow \Aut(T_2)$, which is given by 
    \[
    e \mapsto 
    \xymatrix@-1.45pc{
    *={\bullet} \save+<0ex,2ex>*={1} \restore
    & &
    *={\bullet} \save+<0ex,2ex>*={2} \restore
    & &  
    *={\bullet} \save+<0ex,2ex>*={3} \restore
    & & 
    *={\bullet} \save+<0ex,2ex>*={4} \restore \frm{} \\
    &  \ar@{-}[lu]
    *={\bullet} \save+<0ex,0ex>*={\bullet} \restore
    \ar@{-}[ru] 
    & & & & 
    \ar@{-}[lu]
    *={\bullet} \save+<0ex,0ex>*={\bullet} \restore
    \ar@{-}[ru] 
    & \frm{} \\
    & & &\ar@{-}[llu] 
    \save+<0ex,0ex>*={\circ} \restore \save+<0ex,0ex>*={\circ} \restore
    \ar@{-}[rru]& & & \frm{} \\ 
    }, 
    \qquad 
    (1,2) \mapsto 
    \xymatrix@-1.45pc{
    *={\bullet} \save+<0ex,2ex>*={1} \restore \ar@{<..>}@/^.15pc/[rr]
    & &
    *={\bullet} \save+<0ex,2ex>*={2} \restore
    & &  
    *={\bullet} \save+<0ex,2ex>*={3} \restore
    & & 
    *={\bullet} \save+<0ex,2ex>*={4} \restore
    \frm{} \\
    &  \ar@{-}[lu]
    *={\bullet} \save+<0ex,0ex>*={\bullet} \restore
    \ar@{-}[ru] 
    & & & & 
    \ar@{-}[lu]
    *={\bullet} \save+<0ex,0ex>*={\bullet} \restore
    \ar@{-}[ru] 
    & \\
    & & &\ar@{-}[llu] 
    *={\circ} \save+<0ex,0ex>*={\circ} \restore
    \ar@{-}[rru]& & & \frm{} \\ 
    } 
    \quad 
    = 
    \quad 
    \xymatrix@-1.5pc{
    *={\bullet} \save+<0ex,2ex>*={2} \restore
    & &
    *={\bullet} \save+<0ex,2ex>*={1} \restore
    & &  
    *={\bullet} \save+<0ex,2ex>*={3} \restore
    & & 
    *={\bullet} \save+<0ex,2ex>*={4} \restore
    \frm{}\\
    &  \ar@{-}[lu]
    *={\bullet} \save+<0ex,0ex>*={\bullet} \restore
    \ar@{-}[ru]
    & & & & 
    \ar@{-}[lu]*={\bullet} \save+<0ex,0ex>*={\bullet} \restore
    \ar@{-}[ru] 
    & \frm{} \\
    & & &\ar@{-}[llu] 
    *={\circ} \save+<0ex,0ex>*={\circ} \restore
    \ar@{-}[rru]& & & \frm{} \\ 
    }. 
    \] 
    Note that there is also an embedding $j:\A_2\longhookrightarrow S_4$ of the wreath product $\A_2$ into the symmetric group $S_4$ on $4$ letters, 
    where one just keeps track of the labels on the top of the labeled tree $T_2$. For example
    $((1,2),e;e)\mapsto (1,2)$ and 
    $(e,e;(1,2))\mapsto (1,3)(2,4)$.
    \end{example}
    

    \begin{example}\label{ex:A1-in-A2}
    The group $\A_3$ has a canonical identification with the automorphism group of the tree 
\begin{center}
    {\Large $T_3:$ $\qquad$} 
    $\xymatrix@-1.5pc{
    *={\bullet} \save+<0ex,2ex>*={1} \restore
    & & 
    *={\bullet} \save+<0ex,2ex>*={2} \restore
    & & 
    *={\bullet} \save+<0ex,2ex>*={3} \restore
    & &
    *={\bullet} \save+<0ex,2ex>*={4} \restore
    & &
    *={\bullet} \save+<0ex,2ex>*={5} \restore
    & &
    *={\bullet} \save+<0ex,2ex>*={6} \restore
    & &
    *={\bullet} \save+<0ex,2ex>*={7} \restore
    & &
    *={\bullet} \save+<0ex,2ex>*={8} \restore \frm{} \\
    &   \ar@{-}[ul] 
    *={\bullet} \save+<0ex,0ex>*={\bullet} \restore
    \ar@{-}[ur]
    & & & & \ar@{-}[ul] 
    *={\bullet} \save+<0ex,0ex>*={\bullet} \restore
    \ar@{-}[ur]  & & & & \ar@{-}[ul] 
    *={\bullet} \save+<0ex,0ex>*={\bullet} \restore
    \ar@{-}[ur] & & & & \ar@{-}[ul] 
    *={\bullet} \save+<0ex,0ex>*={\bullet} \restore
    \ar@{-}[ur] & 
    \frm{}  \\
    & & & \ar@{-}[ull] 
    *={\bullet} \save+<0ex,0ex>*={\bullet} \restore
    \ar@{-}[urr]
    & & & & & & & & 
    \ar@{-}[ull]
    *={\bullet} \save+<0ex,0ex>*={\bullet} \restore
    \ar@{-}[urr] & & & \frm{} \\
    & & & & & & & \ar@{-}[ullll] 
    *={\circ} \save+<0ex,0ex>*={\circ} \restore
    \ar@{-}[urrrr]& & & & & & & \frm{} \\ 
    }$.
\end{center}  
    Furthermore, the permutation embedding $i: \A_2  \longhookrightarrow \A_3$ gives rise to the identification between $\A_2$ and the automorphisms of $T_3$ that fixes the children of level $2$ vertices fixed on the right branch: 
    \begin{align*}
    ((1,2),e;e)
    \hspace{2mm}
    \mbox{\Large $\mapsto$} 
    \hspace{2mm}
    &\xymatrix@-1.5pc{
    *={\bullet} \save+<0ex,2ex>*={1} \restore 
    \ar@{<..>}@/^0.15pc/[rr]
    & & 
    *={\bullet} \save+<0ex,2ex>*={2} \restore
    & & 
    *={\bullet} \save+<0ex,2ex>*={3} \restore
    & &
    *={\bullet} \save+<0ex,2ex>*={4} \restore
    & &
    *={\bullet} \save+<0ex,2ex>*={5} \restore
    & &
    *={\bullet} \save+<0ex,2ex>*={6} \restore
    & &
    *={\bullet} \save+<0ex,2ex>*={7} \restore
    & &
    *={\bullet} \save+<0ex,2ex>*={8} \restore \frm{} \\
    &   \ar@{-}[ul] 
    *={\bullet} \save+<0ex,0ex>*={\bullet} \restore
    \ar@{-}[ur]
    & & & & \ar@{-}[ul] 
    *={\bullet} \save+<0ex,0ex>*={\bullet} \restore
    \ar@{-}[ur]  & & & & \ar@{-}[ul] 
    *={\bullet} \save+<0ex,0ex>*={\bullet} \restore
    \ar@{-}[ur] & & & & \ar@{-}[ul] 
    *={\bullet} \save+<0ex,0ex>*={\bullet} \restore
    \ar@{-}[ur] & 
    \frm{}  \\
    & & & \ar@{-}[ull] 
    *={\bullet} \save+<0ex,0ex>*={\bullet} \restore
    \ar@{-}[urr]
    & & & & & & & & 
    \ar@{-}[ull]
    *={\bullet} \save+<0ex,0ex>*={\bullet} \restore
    \ar@{-}[urr] & & & \frm{} \\
    & & & & & & & \ar@{-}[ullll] 
    *={\circ} \save+<0ex,0ex>*={\circ} \restore
    \ar@{-}[urrrr]& & & & & & & \frm{} \\ 
    },  \\ 
    (e,e;(1,2))
    \hspace{2mm}
    \mbox{\Large $\mapsto$} 
    \hspace{2mm}
    &\xymatrix@-1.5pc{
    *={\bullet} \save+<0ex,2ex>*={1} \restore
    & & 
    *={\bullet} \save+<0ex,2ex>*={2} \restore
    & & 
    *={\bullet} \save+<0ex,2ex>*={3} \restore
    & &
    *={\bullet} \save+<0ex,2ex>*={4} \restore
    & &
    *={\bullet} \save+<0ex,2ex>*={5} \restore
    & &
    *={\bullet} \save+<0ex,2ex>*={6} \restore
    & &
    *={\bullet} \save+<0ex,2ex>*={7} \restore
    & &
    *={\bullet} \save+<0ex,2ex>*={8} \restore \frm{} \\
    &   \ar@{-}[ul] 
    *={\bullet} \save+<0ex,0ex>*={\bullet} \restore
    \ar@{-}[ur]
    \ar@{<..>}@/^0.25pc/[rrrr]
    & & & & \ar@{-}[ul] 
    *={\bullet} \save+<0ex,0ex>*={\bullet} \restore
    \ar@{-}[ur]  & & & & \ar@{-}[ul] 
    *={\bullet} \save+<0ex,0ex>*={\bullet} \restore
    \ar@{-}[ur] & & & & \ar@{-}[ul] 
    *={\bullet} \save+<0ex,0ex>*={\bullet} \restore
    \ar@{-}[ur] & 
      \\
    & & & \ar@{-}[ull] 
    *={\bullet} \save+<0ex,0ex>*={\bullet} \restore
    \ar@{-}[urr]
    & & & & & & & & 
    \ar@{-}[ull]
    *={\bullet} \save+<0ex,0ex>*={\bullet} \restore
    \ar@{-}[urr] & & & \\
    & & & & & & & \ar@{-}[ullll] 
    *={\circ} \save+<0ex,0ex>*={\circ} \restore
    \ar@{-}[urrrr]& & & & & & & \frm{} \\ 
    }. 
    \end{align*} 
    
    \end{example}
    Therefore, $\A_n$ is isomorphic to the automorphism group of a complete rooted binary tree $T_n$ of height $n$, and $\A_{n-1}$ is isomorphic to the subgroup of the automorphism group of $T_n$, which fixes the children of level $n-1$ vertices. 
    
    We will denote by $\widehat{\A}_n$ the image of $\A_n$ inside $\A_{n+1}$ induced by the map \[
    \left\{1,2,\dots,2^n\right\} \lra  \left\{2^n+1,2^n+2,\dots,2^{n+1}\right\},
    \] 
    which sends the label $k$ to the label $2^n+k$, where $1\leq k\leq 2^n$, and by ${\beta}_{n+1}$ the permutation $(1,2^n+1)(2,2^n+2)\cdots(2^n,2^{n+1})$.

Note that in terms of the tree $T_{n+1}$, $\A_n$ is the automorphism of the children of the leftmost branch growing from level $0$, $\widehat{\A}_n$ is the automorphism of the children of the rightmost branch growing from level $0$, and ${\beta}_{n+1}$ is the automorphism of the tree $T_{n+1}$ which swaps $\A_n$ and $\widehat{\A}_n$.

\begin{example}
When $n=2$,  $\widehat{\A}_2$ is the automorphism of the subtree of $T_3$ indicated by the dotted edges: 
\[ 
\xymatrix@-1.3pc{
    *={\bullet} \save+<0ex,2ex>*={1} \restore
    & &
    *={\bullet} \save+<0ex,2ex>*={2} \restore
    & &  
    *={\bullet} \save+<0ex,2ex>*={3} \restore
    & & 
    *={\bullet} \save+<0ex,2ex>*={4} \restore
    \frm{} \\
    &  \ar@{..}[lu]
    *={\bullet} \save+<0ex,0ex>*={\bullet} \restore
    \ar@{..}[ru] 
    & & & & 
    \ar@{..}[lu]
    *={\bullet} \save+<0ex,0ex>*={\bullet} \restore
    \ar@{..}[ru] 
    & \frm{} \frm{} \\
    & & &\ar@{..}[llu] 
    *={\circ} \save+<0ex,0ex>*={\circ} \restore
    \ar@{..}[rru]& & & \frm{} \\ 
    }
    \hspace{2mm}
    \mbox{\Large $\longhookrightarrow$} 
    \hspace{2mm}
    \xymatrix@-1.5pc{
    *={\bullet} \save+<0ex,2ex>*={1} \restore
    & & 
    *={\bullet} \save+<0ex,2ex>*={2} \restore
    & & 
    *={\bullet} \save+<0ex,2ex>*={3} \restore
    & &
    *={\bullet} \save+<0ex,2ex>*={4} \restore
    & &
    *={\bullet} \save+<0ex,2ex>*={5} \restore
    & &
    *={\bullet} \save+<0ex,2ex>*={6} \restore
    & &
    *={\bullet} \save+<0ex,2ex>*={7} \restore
    & &
    *={\bullet} \save+<0ex,2ex>*={8} \restore \frm{} \\
    &   \ar@{-}[ul] 
    *={\bullet} \save+<0ex,0ex>*={\bullet} \restore
    \ar@{-}[ur]
    & & & & \ar@{-}[ul] 
    *={\bullet} \save+<0ex,0ex>*={\bullet} \restore
    \ar@{-}[ur]  & & & & \ar@{..}[ul] 
    *={\bullet} \save+<0ex,0ex>*={\bullet} \restore
    \ar@{..}[ur] & & & & \ar@{..}[ul] 
    *={\bullet} \save+<0ex,0ex>*={\bullet} \restore
    \ar@{..}[ur] & 
    \frm{}  \\
    & & & \ar@{-}[ull] 
    *={\bullet} \save+<0ex,0ex>*={\bullet} \restore
    \ar@{-}[urr]
    & & & & & & & & 
    \ar@{..}[ull]
    *={\bullet} \save+<0ex,0ex>*={\bullet} \restore
    \ar@{..}[urr] & & & \frm{} \\
    & & & & & & & \ar@{-}[ullll] 
    *={\circ} \save+<0ex,0ex>*={\circ} \restore
    \ar@{-}[urrrr]& & & & & & & \frm{} \\ 
    },  \\ 
\] 
and ${\beta}_3$ is the tree automorphism given below: 
\[ 
    \xymatrix@-1.3pc{
    *={\bullet} \save+<0ex,2ex>*={1} \restore
    & & 
    *={\bullet} \save+<0ex,2ex>*={2} \restore
    & & 
    *={\bullet} \save+<0ex,2ex>*={3} \restore
    & &
    *={\bullet} \save+<0ex,2ex>*={4} \restore
    & &
    *={\bullet} \save+<0ex,2ex>*={5} \restore
    & &
    *={\bullet} \save+<0ex,2ex>*={6} \restore
    & &
    *={\bullet} \save+<0ex,2ex>*={7} \restore
    & &
    *={\bullet} \save+<0ex,2ex>*={8} \restore \frm{} \\
    &   \ar@{-}[ul] 
    *={\bullet} \save+<0ex,0ex>*={\bullet} \restore
    \ar@{-}[ur]
    & & & & \ar@{-}[ul] 
    *={\bullet} \save+<0ex,0ex>*={\bullet} \restore
    \ar@{-}[ur]  & & & & \ar@{-}[ul] 
    *={\bullet} \save+<0ex,0ex>*={\bullet} \restore
    \ar@{-}[ur] & & & & \ar@{-}[ul] 
    *={\bullet} \save+<0ex,0ex>*={\bullet} \restore
    \ar@{-}[ur] & 
    \frm{}  \\
    & & & \ar@{-}[ull] 
    *={\bullet} \save+<0ex,0ex>*={\bullet} \restore
    \ar@{-}[urr] 
    \ar@{<..>}@/^0.25pc/[rrrrrrrr]^{\mbox{\large ${\beta}_3$}}
    & & & & & & & & 
    \ar@{-}[ull]
    *={\bullet} \save+<0ex,0ex>*={\bullet} \restore
    \ar@{-}[urr] 
    & & & \frm{} \\
    & & & & & & & \ar@{-}[ullll] 
    *={\circ} \save+<0ex,0ex>*={\circ} \restore 
    \ar@{-}[urrrr]& & & & & & & \frm{} \\ 
    }.  \\ 
\] 
\end{example}
    
    We will make a systematic use of the following observations in the rest of the paper:

\begin{lemma}\label{enum:labelsets-L1L2} 
The groups $\A_n$ and $\widehat{\A}_n$ commute.
\end{lemma}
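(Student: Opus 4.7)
The plan is to show that $A_n$ and $\widehat{A}_n$, when realized as subgroups of $A_{n+1}$, act on disjoint sets of leaves of $T_{n+1}$, so that their images in $S_{2^{n+1}}$ have disjoint supports and therefore commute. Specifically, the permutation embedding $i_p: A_n \hookrightarrow A_{n+1}$ (which, per the paper's convention, is just $i$) identifies $A_n$ with the tree automorphisms supported on the leftmost subtree growing from the root of $T_{n+1}$, so that each $a \in A_n$, viewed as a permutation of $\{1,\dots,2^{n+1}\}$, fixes every label in $\{2^n+1,\dots,2^{n+1}\}$. The definition of $\widehat{A}_n$ via the relabeling $k \mapsto 2^n+k$ makes every $\hat{a} \in \widehat{A}_n$ a permutation supported in $\{2^n+1,\dots,2^{n+1}\}$, fixing the labels $\{1,\dots,2^n\}$.

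First I would make the support claim for $A_n$ precise by induction on $n$: since $A_{n+1} = A_n \wr S_2$ is generated by $i_p(A_n)$ together with the swap $\beta_{n+1}$, an element lying in $i_p(A_n) \subset A_{n+1}$ does not involve $\beta_{n+1}$, so (by the inductive hypothesis applied inside the left copy) it fixes every leaf in the right half. Combining this with the analogous statement for $\widehat{A}_n$, the lemma follows from the elementary fact that two permutations with disjoint supports commute, once we remember that the map $j: A_{n+1} \hookrightarrow S_{2^{n+1}}$ is injective.

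Equivalently, and perhaps more cleanly, one can argue directly from the semidirect product structure $A_{n+1} \cong (A_n \times A_n) \rtimes S_2$ with multiplication $(x;\sigma)(y;\lambda) = (xy^\sigma;\sigma\lambda)$: under $i_p$, the subgroup $A_n$ is identified with $A_n \times \{e\}$ inside the base group, while $\widehat{A}_n$ is identified with $\{e\} \times A_n$. Setting $\sigma = \lambda = e$ reduces the wreath-product multiplication to componentwise multiplication of the base-group coordinates, so
\[
((a,e);e)\cdot ((e,\hat{a});e) \;=\; ((a,\hat{a});e) \;=\; ((e,\hat{a});e)\cdot ((a,e);e).
\]

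There is no real obstacle: the content of the lemma is the structural observation that $A_n$ and $\widehat{A}_n$ are the two commuting factors of the base group $A_n \times A_n$ in the wreath product $A_{n+1} = A_n \wr S_2$. The only bookkeeping is to align the conventions for $i_p$ and for $\widehat{A}_n$ with the ``first factor'' and ``second factor'' inclusions respectively, which is immediate from the definitions set up in Section~\ref{section:iterated-wreath-products}.
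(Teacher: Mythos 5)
Your first argument — that $A_n$ and $\widehat{A}_n$ act on the disjoint label sets $\{1,\dots,2^n\}$ and $\{2^n+1,\dots,2^{n+1}\}$ respectively, so their images in $S_{2^{n+1}}$ have disjoint supports and hence commute — is precisely the paper's proof. Your second, structural argument via the base group $A_n\times A_n$ of the wreath product $A_{n+1}\cong (A_n\times A_n)\rtimes S_2$ is a valid alternative and arguably cleaner, since it avoids any appeal to the faithfulness of the permutation representation; but it buys nothing essential here, as the paper works throughout with the permutation picture and the disjoint-support observation is immediate in that setting.
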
 

\begin{proof}
$\A_n$ only acts on the label set $L_1=\{1,2,\dots,2^n\}$ and $\widehat{\A}_n$ only acts on the label set $L_2=\{2^n+1,2^n+2,\ldots,2^{n+1}\}$. Hence, acting on disjoint labels, these two subgroups commute with each other, \textit{i.e.},  $\A_n\widehat{\A}_n = \widehat{\A}_n \A_n$. 
\end{proof}

\begin{remark}\label{enum:beta-link} 
The automorphism ${\beta}_{n+1}$ is the only link between the two branches $\A_n$ and $\widehat{\A}_n$.
\end{remark}

\begin{lemma}\label{enum:conj-beta}
Conjugating $\A_n$ by ${\beta}_{n+1}$ gives $\widehat{\A}_n$.
\end{lemma}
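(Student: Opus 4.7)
The plan is to work directly at the level of permutations in $S_{2^{n+1}}$ and use the standard fact that conjugation by $\tau$ in a symmetric group relabels cycles via $\tau$. First, I would record two trivial observations: that $\beta_{n+1}$ is a product of disjoint transpositions and hence an involution, so $\beta_{n+1}^{-1}=\beta_{n+1}$; and that, as a bijection on labels, $\beta_{n+1}$ sends $k\in L_1=\{1,\dots,2^n\}$ to $2^n+k\in L_2=\{2^n+1,\dots,2^{n+1}\}$ and vice versa. These are precisely the relabelings used to define $\widehat{A}_n$ from $A_n$.

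Next, pick any $\sigma\in A_n$ and write it in cycle notation. By Lemma~\ref{enum:labelsets-L1L2} (or just by construction), $\sigma$ acts trivially on $L_2$, so every nontrivial cycle of $\sigma$ uses only elements of $L_1$. Applying the standard conjugation rule, $\beta_{n+1}\sigma\beta_{n+1}^{-1}$ is obtained from $\sigma$ by replacing each label $a$ in the cycle decomposition by $\beta_{n+1}(a)$. Since every such $a$ lies in $L_1$, this replaces $a$ by $2^n+a\in L_2$, producing a permutation that fixes $L_1$ pointwise and acts on $L_2$ by the image of $\sigma$ under the relabeling $k\mapsto 2^n+k$. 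By the definition of $\widehat{A}_n$, this is exactly the element $\widehat{\sigma}\in\widehat{A}_n$ corresponding to $\sigma$.

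This establishes $\beta_{n+1}A_n\beta_{n+1}^{-1}\subseteq\widehat{A}_n$, and the map $\sigma\mapsto\beta_{n+1}\sigma\beta_{n+1}^{-1}$ is an injective group homomorphism. Because $\lvert A_n\rvert=\lvert\widehat{A}_n\rvert$, this inclusion is in fact an equality, proving the lemma. An equivalent tree-theoretic way to see this, which I would mention as a sanity check, is that under the identification $A_{n+1}\simeq\Aut(T_{n+1})$, the automorphism $\beta_{n+1}$ swaps the two depth-$n$ subtrees hanging off the root; conjugating the subgroup that acts only on the left subtree by this swap produces the subgroup that acts only on the right subtree. No real obstacle is expected here: the argument is a textbook cycle-relabeling computation, and the only subtlety is to keep the two label sets $L_1$ and $L_2$ straight when writing the cycle decompositions.
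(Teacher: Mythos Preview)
Your proof is correct and follows essentially the same approach as the paper: both use the standard cycle-relabeling rule for conjugation in the symmetric group, observe that $\beta_{n+1}$ sends each label $k\in L_1$ to $2^n+k\in L_2$, and conclude that $\beta_{n+1}A_n\beta_{n+1}^{-1}=\widehat{A}_n$. Your version is slightly more detailed (explicitly invoking injectivity and the cardinality match to upgrade the inclusion to an equality, plus the tree-theoretic sanity check), but the underlying argument is the same.
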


\begin{proof} 
Conjugating $g\in \A_n$ by ${\beta}_{n+1}$ is same as applying the permutation ${\beta}_{n+1}$ to the labels of $g$; hence the resulting element is an element of $\widehat{\A}_n$ obtained by replacing $i$ with $2^n+i$ in the cycle notation of $g$. Therefore one has $\A_n{\beta}_{n+1}={\beta}_{n+1}\widehat{\A}_n$.
\end{proof}

\begin{example}
\label{ex:}
Consider the transposition $g=(1,2) \in \A_2$. Then 
\[
{\beta}_{3}^{-1} g {\beta}_{3} = ({\beta}_{3}(1), {\beta}_{4}(3)) = (5,6) \in \widehat{\A}_n
\]  
by Lemma~\ref{enum:conj-beta}. 
\end{example}

    Note that $\{{\beta}_i\}_{i=1,\dots,n}$ is a set of generators for $\A_n$. In terms of the tree diagrams, these elements correspond to swaps at leftmost nodes at every level of the tree. The automorphism ${\beta}_1$ corresponds to the top level, furthest away from the root. 
    
    We will also be referring to groups $\widehat{\A}_{n}$ and $\widehat{\A}_{n+1}$ as the subgroups of $\A_{n+2}$. 
    
    \begin{figure}
    \centering
\begin{tikzpicture}[scale=0.6]
\draw[thick] (0.1,0.1) -- (2,2);    
\draw[thick] (-0.1,0.1) -- (-2,2);

\draw[thick] (-2,2) -- (-3.55,3.8);
\draw[thick] (-2,2) -- (-0.45,3.8);
\draw[thick] (0.15,0) arc (0:360:1.5mm);
\draw[thick,fill] (2.15,2) arc (0:360:1.5mm);
\draw[thick,fill] (-1.85,2) arc (0:360:1.5mm);

\draw[thick,fill] (-3.4,3.8) arc (0:360:1.5mm);
 
\draw[thick,fill] (-.3,3.8) arc (0:360:1.5mm);

\draw[thick,blue] (4.7,3.6) arc (0:360:2);
\draw[thick,blue] (.6,4.8) arc (0:360:1.35);
\draw[thick,blue] (-2.7,4.8) arc (0:360:1.35);

\node at (2.8,3.6) {\Large $\widehat{\A}_{n+1}$};
\node at (-4.1,4.9) {\Large $\A_n$};
\node at (-.6,4.9) {\Large $\widehat{\A}_n$};

\end{tikzpicture}
    \caption{Images of $\A_n$, $\widehat{\A}_n$ and $\widehat{\A}_{n+1}$ represented in the tree diagram of $T_{n+2}$.}
    \label{fig3_1}
\end{figure} 
    
    For example, in $T_4$, we have 
    \[ 
    \xymatrix@-1.6pc{
    *={\bullet} \save+<0ex,2ex>*={1} \restore
    & & 
    *={\bullet} \save+<0ex,2ex>*={2} \restore
    & & 
    *={\bullet} \save+<0ex,2ex>*={3} \restore
    & &
    *={\bullet} \save+<0ex,2ex>*={4} \restore
    & &
    *={\bullet} \save+<0ex,2ex>*={5} \restore
    & &
    *={\bullet} \save+<0ex,2ex>*={6} \restore
    & &
    *={\bullet} \save+<0ex,2ex>*={7} \restore
    & &
    *={\bullet} \save+<0ex,2ex>*={8} \restore 
    & &
    *={\bullet} \save+<0ex,2ex>*={9} \restore
    & & 
    *={\bullet} \save+<0ex,2ex>*={10} \restore
    & & 
    *={\bullet} \save+<0ex,2ex>*={11} \restore
    & &
    *={\bullet} \save+<0ex,2ex>*={12} \restore
    & &
    *={\bullet} \save+<0ex,2ex>*={13} \restore
    & &
    *={\bullet} \save+<0ex,2ex>*={14} \restore
    & &
    *={\bullet} \save+<0ex,2ex>*={15} \restore
    & &
    *={\bullet} \save+<0ex,2ex>*={16} \restore \frm{} \\
    &   \ar@{--}[ul] 
    *={\bullet} \save+<0ex,0ex>*={\bullet} \restore
    \ar@{--}[ur]
    & & & & \ar@{--}[ul] 
    *={\bullet} \save+<0ex,0ex>*={\bullet} \restore
    \ar@{--}[ur]  & & & & \ar@{=}[ul] 
    *={\bullet} \save+<0ex,0ex>*={\bullet} \restore
    \ar@{=}[ur] & & & & \ar@{=}[ul] 
    *={\bullet} \save+<0ex,0ex>*={\bullet} \restore
    \ar@{=}[ur] & 
    & &
    &   \ar@{~}[ul] 
    *={\bullet} \save+<0ex,0ex>*={\bullet} \restore
    \ar@{~}[ur]
    & & & & \ar@{~}[ul] 
    *={\bullet} \save+<0ex,0ex>*={\bullet} \restore
    \ar@{~}[ur]  & & & & \ar@{~}[ul] 
    *={\bullet} \save+<0ex,0ex>*={\bullet} \restore
    \ar@{~}[ur] & & & & \ar@{~}[ul] 
    *={\bullet} \save+<0ex,0ex>*={\bullet} \restore
    \ar@{~}[ur] & 
    \frm{}  \\
    & & & \ar@{--}[ull] 
    *={\bullet} \save+<0ex,0ex>*={\bullet} \restore
    \ar@{--}[urr] 
    \ar@{<..>}@/^0.10pc/[rrrrrrrr]^{\mbox{\large ${\beta}_3$}}
    & & & & & & & & 
    \ar@{=}[ull]
    *={\bullet} \save+<0ex,0ex>*={\bullet} \restore
    \ar@{=}[urr] 
    & & & 
    & & 
    & & & \ar@{~}[ull] 
    *={\bullet} \save+<0ex,0ex>*={\bullet} \restore
    \ar@{~}[urr] 
    \ar@{<..>}@/^0.10pc/[rrrrrrrr]^{\large \mbox{$\widehat{{\beta}}_3$}}
    & & & & & & & & 
    \ar@{~}[ull]
    *={\bullet} \save+<0ex,0ex>*={\bullet} \restore
    \ar@{~}[urr] 
    & & & \frm{} \\
    & & & & & & & 
    \ar@{<..>}@/^0.05pc/[rrrrrrrrrrrrrrrr]^{\mbox{\large ${\beta}_4$}}
    \ar@{-}[ullll] 
    *={\bullet} \save+<0ex,0ex>*={\bullet} \restore 
    \ar@{-}[urrrr]& & & & & & & 
    & & 
    & & & & & & & \ar@{~}[ullll] 
    *={\bullet} \save+<0ex,0ex>*={\bullet} \restore 
    \ar@{~}[urrrr]& & & & & & & 
    \frm{} \\ 
    & &  & &  & &  & &  & &
    & &  & &  & &  & &  & &
    & &  & &  & &  & &  & &  \\ 
    & &  & &  & &  & &  & &
    & &  & &  & 
    \ar@{-}[uullllllll]
    *={\circ} \save+<0ex,0ex>*={\circ} \restore
    \ar@{-}[uurrrrrrrr]
    & &  & &  & &  & &  & & 
    & &  & &  &
    },  \\ 
\] 
where 
\[ 
\mbox{\large $\A_2$}= \Aut\vast( \xymatrix@-1.25pc{
    *={\bullet} \save+<0ex,2ex>*={1} \restore 
   & &
    *={\bullet} \save+<0ex,2ex>*={2} \restore
   & & 
    *={\bullet} \save+<0ex,2ex>*={3} \restore
   & &  
    *={\bullet} \save+<0ex,2ex>*={4} \restore 
    \\ 
    & 
    \ar@{--}[ul]
    *={\bullet} \save+<0ex,0ex>*={\bullet} \restore
    \ar@{--}[ur]
    & & & &
    \ar@{--}[ul]
    *={\bullet} \save+<0ex,0ex>*={\bullet} \restore
    \ar@{--}[ur]
    &  \\ 
    & & & \ar@{--}[ull] 
    *={\circ} \save+<0ex,0ex>*={\circ} \restore
    \ar@{--}[urr] 
    & & &  \\
} \vast), 
\hspace{1cm}
\mbox{\large $\widehat{\A}_2$} = \Aut\vast( 
\xymatrix@-1.25pc{
    *={\bullet} \save+<0ex,2ex>*={5} \restore 
   & &
    *={\bullet} \save+<0ex,2ex>*={6} \restore
   & & 
    *={\bullet} \save+<0ex,2ex>*={7} \restore
   & &  
    *={\bullet} \save+<0ex,2ex>*={8} \restore 
    \\ 
    & 
    \ar@{=}[ul]
    *={\bullet} \save+<0ex,0ex>*={\bullet} \restore
    \ar@{=}[ur]
    & & & &
    \ar@{=}[ul]
    *={\bullet} \save+<0ex,0ex>*={\bullet} \restore
    \ar@{=}[ur]
    &  \\ 
    & & & \ar@{=}[ull] 
    *={\circ} \save+<0ex,0ex>*={\circ} \restore
    \ar@{=}[urr] 
    & & &  \\
} \vast), 
\] 
and 
\[ 
\mbox{\large $\widehat{\A}_3$}=\Aut \Vast( \xymatrix@-1.05pc{
    *={\bullet} \save+<0ex,2ex>*={9} \restore
    & & 
    *={\bullet} \save+<0ex,2ex>*={10} \restore
    & & 
    *={\bullet} \save+<0ex,2ex>*={11} \restore
    & &
    *={\bullet} \save+<0ex,2ex>*={12} \restore
    & &
    *={\bullet} \save+<0ex,2ex>*={13} \restore
    & &
    *={\bullet} \save+<0ex,2ex>*={14} \restore
    & &
    *={\bullet} \save+<0ex,2ex>*={15} \restore
    & &
    *={\bullet} \save+<0ex,2ex>*={16} \restore \frm{} \\
    &   \ar@{~}[ul] 
    *={\bullet} \save+<0ex,0ex>*={\bullet} \restore
    \ar@{~}[ur]
    & & & & \ar@{~}[ul] 
    *={\bullet} \save+<0ex,0ex>*={\bullet} \restore
    \ar@{~}[ur]  & & & & \ar@{~}[ul] 
    *={\bullet} \save+<0ex,0ex>*={\bullet} \restore
    \ar@{~}[ur] & & & & \ar@{~}[ul] 
    *={\bullet} \save+<0ex,0ex>*={\bullet} \restore
    \ar@{~}[ur] & 
    \frm{}  \\
    & & & \ar@{~}[ull] 
    *={\bullet} \save+<0ex,0ex>*={\bullet} \restore
    \ar@{~}[urr] 
    & & & & & & & & 
    \ar@{~}[ull]
    *={\bullet} \save+<0ex,0ex>*={\bullet} \restore
    \ar@{~}[urr] 
    & & & \frm{} \\ 
    & & & & & & & \ar@{~}[ullll] 
    *={\circ} \save+<0ex,0ex>*={\circ} \restore 
    \ar@{~}[urrrr]& & & & & & & 
    \frm{} \\ 
    } \Vast).
\] 
    
    \label{subsection:dot-diagram}

\section{Structural properties of the iterated wreath products of \texorpdfstring{$S_2$}{S2}}\label{section:structural-properties}

This section provides results about the centers, centralizers, cosets and their representatives for the groups $\A_n$, which will be needed in the rest of the paper.

    Recall that ${\beta}_i$ is the permutation on the leftmost vertex at the level $i$ of the binary tree $T_n$.  We start by citing a generators and relations description of iterated wreath products of symmetric groups.
    
    \begin{theorem}[Theorem 1.2, \cite{OOR}] 
    \label{thm:OOR-presentation}
    The group $\A_n$ is given by generators ${\beta}_1,\ldots, {\beta}_n$ with relations 
    \begin{enumerate}
    \item ${\beta}_i^2=\id$ for $1\leq i\leq n$, 
    \item $({\beta}_i{\beta}_j)^4=\id$ for $i\not=j$, $1\leq i,j\leq n$, and 
    \item $({\beta}_i{\beta}_j{\beta}_i{\beta}_{j+k})^2 = \id$ for $j>i$ and $1\leq k\leq n-i$. 
    \end{enumerate}
    \end{theorem}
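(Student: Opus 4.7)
My plan is the standard two-step argument for a finite presentation: let $\tilde G_n$ be the abstract group with generators $\beta_1,\dots,\beta_n$ and the three listed relations, construct a surjection $\phi : \tilde G_n \twoheadrightarrow A_n$, and then bound $|\tilde G_n| \leq |A_n| = 2^{2^n-1}$. The combination of surjectivity and the order bound will force $\phi$ to be an isomorphism.

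For the surjection, I map each abstract generator to the tree swap of the same name in $A_n$. Surjectivity is immediate from the remark just before the theorem that $\{\beta_i\}_{i=1}^{n}$ generates $A_n$, so it suffices to verify that each of the three relations holds in $A_n$ itself. Relation~(1) is clear, since each $\beta_i$ is a product of disjoint transpositions in $S_{2^{n}}$. Relation~(2) is a direct computation on the relevant leaves: for $i \neq j$, the element $\beta_i\beta_j$ decomposes into $4$-cycles on the portion of $T_n$ where the two swap levels interact, together with transpositions or fixed points elsewhere, so it has order dividing $4$. Relation~(3) is equivalent to the commutator identity $[\beta_i\beta_j\beta_i,\; \beta_{j+k}] = \id$, which one checks by computing the support of the conjugate $\beta_i\beta_j\beta_i$ in the tree via Lemma~\ref{enum:conj-beta} and observing that its intersection with the support of $\beta_{j+k}$ is organized into pairs of leaves swapped compatibly.

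For the order bound I would induct on $n$, with the trivial case $\tilde G_1 \cong S_2$. In the inductive step, set $H := \langle \beta_1,\dots,\beta_{n-1}\rangle \subseteq \tilde G_n$. Restricting the three relations to words only in $\beta_1,\dots,\beta_{n-1}$ yields a set of relations that contains all defining relations of $\tilde G_{n-1}$, so the canonical map $\tilde G_{n-1} \twoheadrightarrow H$ is surjective and $|H|\leq |A_{n-1}|$ by the inductive hypothesis. The key structural claim is then that $H$ and its $\beta_n$-conjugate $\beta_n H\beta_n$ commute pointwise inside $\tilde G_n$; once this is in hand, any word in the generators reduces (using Relation~(1) for $\beta_n$) to a normal form $h_1 \cdot (\beta_n h_2 \beta_n) \cdot \beta_n^{\varepsilon}$ with $h_1,h_2 \in H$ and $\varepsilon \in \{0,1\}$, yielding $|\tilde G_n| \leq 2|H|^2 \leq 2|A_{n-1}|^2 = |A_n|$. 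The main obstacle is exactly this commutativity of $H$ with $\beta_n H \beta_n$: Relation~(3) instantiated at $j+k=n$ only yields commutation of $\beta_n$ with each single conjugate $\beta_i\beta_j\beta_i$, and promoting this to commutativity of all of $H$ with all of $\beta_n H\beta_n$ requires iterated conjugations using Relations~(1)--(3) together with the inductive structure of $H$. This is the step that encodes the wreath-product structure $A_n = A_{n-1}\wr S_2$ (compare Lemma~\ref{enum:labelsets-L1L2}) inside the abstract presentation, and it is the reason Relation~(3) is stated with its particular triangular range on $k$.
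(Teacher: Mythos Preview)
The paper does not supply its own proof of this theorem: it is quoted verbatim as Theorem~1.2 of \cite{OOR} and used as a black box, with no argument given in the text. So there is nothing in the paper to compare your proposal against.

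That said, your outline is the standard and correct strategy for a result of this type (surjection plus order bound), and the place you flag as the main obstacle is indeed the crux. A couple of remarks on the details. First, the index range in Relation~(3) as printed, $1\le k\le n-i$, appears to be a typo for $1\le k\le n-j$ (otherwise $\beta_{j+k}$ can fall outside the generating set); you should work with the corrected range. Second, your description of what Relation~(3) gives when you aim for $j+k=n$ is not quite aligned with what you need: to get $[\beta_n H\beta_n,\,H]=1$ you want $[\beta_n\beta_a\beta_n,\beta_b]=1$ for $a,b<n$, whereas Relation~(3) in the form $(\beta_i\beta_j\beta_i\beta_{j+k})^2=\id$ gives $[\beta_i\beta_j\beta_i,\beta_{j+k}]=1$ with $j>i$. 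Setting $j=n$ there produces commutation of $\beta_i\beta_n\beta_i$ with generators of index $>n$, which is vacuous. The useful instantiations are instead those with $j+k=n$ (so $j<n$), which say that $\beta_n$ commutes with the conjugates $\beta_i\beta_j\beta_i$ for $i<j<n$; turning these into the full statement $[\beta_n H\beta_n,H]=1$ is exactly the iterated-conjugation argument you allude to, and it goes through once one shows inductively that $H$ is generated by the elements $\beta_i\beta_j\beta_i$ together with the $\beta_i$ themselves and tracks how $\beta_n$ interacts with each.
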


\begin{proposition}{\label{prop:decomp-An}}
The group $\A_{n+l+1}$ decomposes as the disjoint union 
\[ \coprod_{x\in \mathcal{B}_{n,l+1} \cup \{ \id \}} 
   \A_{n} \widehat{\A}_n \widehat{\A}
_{n+1} \widehat{\A}_{n+2}\cdots \widehat{\A}_{n+l}x, \] 
where 
\begin{align*}
\mathcal{B}_{n,l+1} &:= \left\{ {\beta}_I := {\beta}_{i_1}{\beta}_{i_2}\cdots {\beta}_{i_s} | I= (i_1, i_2,\ldots, i_s) \right. \\ 
&\hspace{8mm} \left. \mbox{where }  n< i_1<i_2<\ldots<i_s \leq n+l+1 \right\}. 
\end{align*} 
\end{proposition}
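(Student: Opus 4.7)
The plan is to prove the decomposition by induction on $l \geq 0$. The main structural input is the elementary two-term decomposition
\[
A_{m+1} = A_m \widehat{A}_m \sqcup A_m \widehat{A}_m \beta_{m+1}
\]
coming from the wreath product identity $A_{m+1} = A_m \wr S_2$: here $A_m \widehat{A}_m$ is a subgroup because $A_m$ and $\widehat{A}_m$ commute (Lemma~\ref{enum:labelsets-L1L2}), and the two cosets are disjoint because elements of $A_m\widehat{A}_m$ preserve the partition of leaves into $L_1 = \{1,\ldots,2^m\}$ and $L_2 = \{2^m+1,\ldots,2^{m+1}\}$ while $\beta_{m+1}$ swaps $L_1$ with $L_2$. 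The base case $l=0$ is exactly this identity with $m = n$, since $\mathcal{B}_{n,1}\cup\{\id\} = \{\id,\beta_{n+1}\}$.

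For the inductive step I would assume the decomposition for $l-1$, apply the two-term decomposition at $m = n+l$ to obtain
\[
A_{n+l+1} = A_{n+l}\widehat{A}_{n+l} \sqcup A_{n+l}\widehat{A}_{n+l}\beta_{n+l+1},
\]
and then substitute the inductive description of $A_{n+l}$ into each factor $A_{n+l}$. The crucial step is to commute each resulting $\beta_J$ (with $J \subseteq \{n+1,\ldots,n+l\}$) past the factor $\widehat{A}_{n+l}$. Every $\beta_i$ appearing in $\beta_J$ satisfies $i \leq n+l$ and hence permutes only leaves in $\{1,\ldots,2^{n+l}\}$, whereas $\widehat{A}_{n+l}$ acts only on $\{2^{n+l}+1,\ldots,2^{n+l+1}\}$; therefore Lemma~\ref{enum:labelsets-L1L2} yields $\beta_J \widehat{A}_{n+l} = \widehat{A}_{n+l}\beta_J$. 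After this rearrangement the common subgroup $H := A_n\widehat{A}_n\widehat{A}_{n+1}\cdots\widehat{A}_{n+l}$ appears as a left factor in every term.

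What remains is indexing. Each subset $I \subseteq \{n+1,\ldots,n+l+1\}$ either excludes $n+l+1$, in which case $\beta_I = \beta_J$ with $J = I$, or contains it, in which case $\beta_I = \beta_J\beta_{n+l+1}$ with $J = I \setminus\{n+l+1\}$; this bijection identifies the two large unions $\coprod_J H\beta_J$ and $\coprod_J H\beta_J\beta_{n+l+1}$ with $\coprod_{x \in \mathcal{B}_{n,l+1}\cup\{\id\}} Hx$.

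Disjointness splits naturally: the two large pieces are disjoint because they lie in the disjoint cosets $A_{n+l}$ and $A_{n+l}\beta_{n+l+1}$ of $A_{n+l}$ in $A_{n+l+1}$, while within each large piece the individual cosets $H\beta_J$ are disjoint by the inductive hypothesis. The only nontrivial step is the commutation of $\beta_J$ with $\widehat{A}_{n+l}$; this is essentially a bookkeeping check on supports, so the proof is structurally straightforward once one tracks which generators act on which label set.
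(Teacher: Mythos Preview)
Your proof is correct and follows essentially the same route as the paper: induction on $l$, with the base case and inductive step both driven by the two-term decomposition $A_{m+1}=A_m\widehat{A}_m\sqcup A_m\widehat{A}_m\beta_{m+1}$, followed by commuting the right factor $\widehat{A}_{n+l}$ past the $\beta_J$'s using disjoint supports. Your treatment of disjointness and of the bijection on index sets is in fact more explicit than the paper's; one small wording fix is that the ``two large pieces'' lie in the disjoint cosets of $A_{n+l}\widehat{A}_{n+l}$ (not of $A_{n+l}$) in $A_{n+l+1}$, but the argument itself is sound.
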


\begin{proof}
We will prove by induction on $l$.

\textit{Case $l=0$}. This is easy to see when one considers $\A_{n+1}$ as the automorphism group of the truncated tree $T_{n+1}$. Such an automorphism either contains a swap at the root of the tree, \textit{i.e.}, ${\beta}_{n+1}$, or not. Automorphisms that do not contain ${\beta}_{n+1}$ consist of combinations of swaps at higher levels of the tree, so they are elements of $\A_n\widehat{\A}_n$. Note that since elements of $\A_n$ and $\widehat{\A}_n$ commute, this set is the same as $\widehat{\A}_n {\A_n}$. The remaining automorphisms should contain ${\beta}_{n+1}$ and may contain any other swap, so they are elements of $\A_n\widehat{\A}_n{\beta}_{n+1}$. Hence $\A_{n+1}=\A_n\widehat{\A}_n\sqcup \A_n\widehat{\A}_n{\beta}_{n+1}$.

Note that one can also express the set $\A_n\widehat{\A}_n{\beta}_{n+1}$ as ${\beta}_{n+1}\A_n\widehat{\A}_n = {\beta}_{n+1}\widehat{\A}_n \A_n = \A_n{\beta}_{n+1}\A_n=\widehat{\A}_n{\beta}_{n+1}\widehat{\A}_n$ using Lemma~\ref{enum:conj-beta}.

\textit{Inductive step}. Suppose the proposition holds for $\A_{n+l}$ for some $l$, that is $\A_{n+l}$ is the disjoint union 
\begin{equation}{\label{eq:decomposition}}
    \A_{n+l}= \coprod_{x\in \mathcal{B}_{n,l} \cup \{ \id \}} 
   \A_{n} \widehat{\A}_n \widehat{\A}_{n+1}\cdots \widehat{\A}_{n+l-1}x.
\end{equation}

Since $\A_{n+l+1} = \A_{n+l}\widehat{\A}_{n+l}\sqcup \A_{n+l}\widehat{\A}_{n+l}{\beta}_{n+l+1}$, by replacing $\A_{n+l}$ with the expression in \eqref{eq:decomposition} one gets 

\begin{align}
    \A_{n+l+1}&= \left(\coprod_{x\in \mathcal{B}_{n,l-1} \cup \{ \id \}} 
   \A_{n} \widehat{\A}_n \cdots \widehat{\A}_{n+l-1}x\right) \widehat{\A}_{n+l} \:\:\sqcup \\
   &\hspace{8mm}\left(\coprod_{x\in \mathcal{B}_{n,l-1} \cup \{ \id \}} 
   \A_{n} \widehat{\A}_n \cdots \widehat{\A}_{n+l-1}x\right) \widehat{\A}_{n+l}{\beta}_{n+l+1}\\
&= \left(\coprod_{x\in \mathcal{B}_{n,l-1} \cup \{ \id \}} 
   \A_{n} \widehat{\A}_n \cdots \widehat{\A}_{n+l-1} \widehat{\A}_{n+l}x\right) \:\: \sqcup \\ 
   &\hspace{8mm}\left(\coprod_{x\in \mathcal{B}_{n,l-1} \cup \{ \id \}} 
   \A_{n} \widehat{\A}_n \cdots \widehat{\A}_{n+l-1} \widehat{\A}_{n+l}x{\beta}_{n+l+1}\right) \\
   &=\coprod_{x\in \mathcal{B}_{n,l} \cup \{ \id \}} 
   \A_{n} \widehat{\A}_n \cdots \widehat{\A}_{n+l}x. 
\end{align}
This concludes the proof. 
\end{proof}

\begin{proposition}{\label{prop:rightcoset-n+l}}
The set of right cosets $\A_{n}\backslash  \A_{n+l+1}$ decomposes as the disjoint union 
   \[ 
   \A_{n}\backslash \A_{n+l+1} = \coprod_{x\in \mathcal{B}_{n,l+1} \cup \{ \id \}} 
   \A_{n}\backslash \left(\A_{n} \widehat{\A}_n \widehat{\A}_{n+1} \widehat{\A}_{n+2}\cdots \widehat{\A}_{n+l}x\right), 
\] 
where 
\begin{align*} 
\mathcal{B}_{n,l+1} &:= \left\{ {\beta}_I := {\beta}_{i_1}{\beta}_{i_2}\cdots {\beta}_{i_s} : I= (i_1, i_2,\ldots, i_s)  \right. \\ 
&\hspace{8mm} \left. \mbox{where }  n< i_1<i_2<\ldots<i_s \leq n+l+1 \right\}. 
\end{align*}
\end{proposition}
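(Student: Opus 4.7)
The plan is to derive this proposition as essentially a direct corollary of Proposition~\ref{prop:decomp-An}. The idea is that Proposition~\ref{prop:decomp-An} already gives a disjoint decomposition of the whole group $A_{n+l+1}$ into pieces of the form $P_x := A_n \widehat{A}_n \widehat{A}_{n+1}\cdots \widehat{A}_{n+l}\, x$ indexed by $x \in \mathcal{B}_{n,l+1} \cup \{\id\}$, so it suffices to verify that these pieces are each saturated with respect to left multiplication by $A_n$; then quotienting by $A_n$ on the left yields the claimed disjoint union of right-coset sets.

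First I would record the key commutativity input. Each $\widehat{A}_{n+k}$ for $k \geq 0$ acts on a subset of the labels $\{2^n+1, 2^n+2, \ldots, 2^{n+l+1}\}$, which is disjoint from the label set $\{1, 2, \ldots, 2^n\}$ on which $A_n$ acts. By Lemma~\ref{enum:labelsets-L1L2} (and the same reasoning iterated over the levels), $A_n$ commutes pointwise with $\widehat{A}_n \widehat{A}_{n+1}\cdots \widehat{A}_{n+l}$. Consequently, for any $a\in A_n$ and any $g = a' \hat{a}\, x \in P_x$ with $a' \in A_n$, $\hat{a} \in \widehat{A}_n\cdots \widehat{A}_{n+l}$, we have $a \cdot g = (aa')\hat{a}\, x \in P_x$. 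Thus each $P_x$ is a union of left $A_n$-cosets, and $A_n \backslash P_x$ is a well-defined subset of $A_n \backslash A_{n+l+1}$.

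Next I would observe that the disjointness of the $P_x$ forces disjointness of their $A_n$-quotients: if $a_1 g_1 \in P_{x_1}$ and $g_2 \in P_{x_2}$ represent the same right coset, then $a_1 g_1 = g_2 \cdot (\text{something in } A_n)$ would, by the previous step, place $g_2$ inside $P_{x_1}$, forcing $x_1 = x_2$ by Proposition~\ref{prop:decomp-An}. Putting these two observations together gives the claimed partition
\[
A_n \backslash A_{n+l+1} \;=\; \coprod_{x \in \mathcal{B}_{n,l+1} \cup \{\id\}} A_n \backslash P_x.
\]

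I do not expect any serious obstacle: the real content is in Proposition~\ref{prop:decomp-An}, and the present statement is just the image of that decomposition under the quotient map $A_{n+l+1} \twoheadrightarrow A_n \backslash A_{n+l+1}$. The only subtlety worth being careful about is justifying that the pieces remain distinct after passing to cosets, which is handled by the commutativity of $A_n$ with $\widehat{A}_n\widehat{A}_{n+1}\cdots \widehat{A}_{n+l}$ noted above.
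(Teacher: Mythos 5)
Your proposal is correct and takes essentially the same approach as the paper: both reduce the claim to the observation that each block $P_x = A_n \widehat{A}_n \cdots \widehat{A}_{n+l}\,x$ from Proposition~\ref{prop:decomp-An} is stable under left multiplication by $A_n$, so the partition of the group descends to a partition of the right cosets. One small remark: the commutativity of $A_n$ with $\widehat{A}_n\cdots\widehat{A}_{n+l}$ that you invoke is not actually needed for the saturation step, since $A_n$ already sits at the leftmost position in $P_x$, so $gP_x = (gA_n)\widehat{A}_n\cdots\widehat{A}_{n+l}\,x = P_x$ follows immediately from $gA_n = A_n$ --- which is exactly the one-line argument the paper gives.
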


\begin{proof}
It is enough to show that the sets $\A_{n} \widehat{\A}_n \widehat{\A}_{n+1} \widehat{\A}_{n+2}\cdots \widehat{\A}_{n+l}x$ that partition $\A_{n+l+1}$ are invariant under left multiplication by $g\in \A_n$. But this obvious since $g\A_n=\A_n$ implies that  
\[ 
g\A_{n} \widehat{\A}_n \widehat{\A}_{n+1} \cdots \widehat{\A}_{n+l}x=\A_{n} \widehat{\A}_n \widehat{\A}_{n+1} \cdots \widehat{\A}_{n+l}x. 
\]
\end{proof}

   \begin{proposition}
   \label{prop:rightcoset-rep-gen}
   A set of representatives for the right cosets $\A_{n}\backslash \A_{n+l+1}$ is given by the disjoint union 
   $\displaystyle \coprod_{x\in \mathcal{B}_{n,l+1} \cup \{ \id \}} 
   \widehat{\A}_{n} \widehat{\A}_{n+1} \widehat{\A}_{n+2}\cdots \widehat{\A}_{n+l}x$.
   \end{proposition}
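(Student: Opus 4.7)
The plan is to bootstrap from Proposition~\ref{prop:rightcoset-n+l}, which already expresses $A_n\backslash A_{n+l+1}$ as a disjoint union, indexed by $x \in \mathcal{B}_{n,l+1}\cup\{\id\}$, of the $A_n$-cosets sitting inside $A_n\widehat{A}_n\widehat{A}_{n+1}\cdots\widehat{A}_{n+l}x$. It therefore suffices to show that, for each fixed $x$, the subset $\widehat{A}_n\widehat{A}_{n+1}\cdots\widehat{A}_{n+l}x$ selects exactly one representative from each such coset.

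Surjectivity is immediate: any element of $A_n\widehat{A}_n\widehat{A}_{n+1}\cdots\widehat{A}_{n+l}x$ has the form $gax$ with $g\in A_n$ and $a\in \widehat{A}_n\widehat{A}_{n+1}\cdots\widehat{A}_{n+l}$, and the right coset $A_n(gax)=A_n(ax)$ manifestly contains the candidate $ax$. For distinctness, suppose $A_n(ax)=A_n(a'x)$ with $a,a'\in \widehat{A}_n\widehat{A}_{n+1}\cdots\widehat{A}_{n+l}$. Cancelling $x$ on the right yields $a(a')^{-1}\in A_n \cap \bigl(\widehat{A}_n\widehat{A}_{n+1}\cdots\widehat{A}_{n+l}\bigr)$, and I would finish by showing this intersection is trivial.

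The triviality is just the disjoint-support argument underlying Lemma~\ref{enum:labelsets-L1L2}, applied uniformly: each factor $\widehat{A}_{n+i}$ acts only on the label set $\{2^{n+i}+1,\ldots,2^{n+i+1}\}$ and fixes every label in $\{1,\ldots,2^n\}$, while every nontrivial element of $A_n$ moves some label in $\{1,\ldots,2^n\}$. Hence $a(a')^{-1}=\id$, forcing $a=a'$. There is no real obstacle here; the only bookkeeping point worth flagging is that the subgroups $\widehat{A}_n,\ldots,\widehat{A}_{n+l}$ pairwise commute (again by disjoint support), so their product set is closed under inverses and may legitimately be treated as a single subgroup when computing $a(a')^{-1}$.
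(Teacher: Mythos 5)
Your proof is correct and follows the same route as the paper's: bootstrap from Proposition~\ref{prop:rightcoset-n+l}, absorb the $A_n$ factor on the left to get surjectivity, then argue distinctness. You actually supply a detail the paper leaves implicit—the paper's displayed chain of equalities simply asserts the final $\coprod$ over $h\in\widehat{A}_n\cdots\widehat{A}_{n+l}x$ without justifying that distinct $h$ give distinct cosets, whereas you make the disjoint-support argument (triviality of $A_n\cap\bigl(\widehat{A}_n\cdots\widehat{A}_{n+l}\bigr)$) explicit.
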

   
\begin{proof}
By Proposition~\ref{prop:rightcoset-n+l}, 
we have 
\begin{align*}
    \{\A_n g\}_{g\in \A_{n+l+1}}&=\coprod_{x\in \mathcal{B}_{n,l+1} \cup \{ \id \}} \{\A_n g\}_{g\in \A_n\widehat{\A}_{n} \widehat{\A}
_{n+1} \widehat{\A}_{n+2}\cdots \widehat{\A}_{n+l}x}\\
&=\coprod_{x\in \mathcal{B}_{n,l+1} \cup \{ \id \}}\{\A_n h\}_{h\in \widehat{\A}_{n} \widehat{\A}_{n+1} \widehat{\A}_{n+2}\cdots \widehat{\A}_{n+l}x} \\
&=\coprod_{x\in \mathcal{B}_{n,l+1} \cup \{ \id \}} 
\left(\coprod_{h\in \widehat{\A}_{n} \widehat{\A}_{n+1} \widehat{\A}_{n+2}\cdots \widehat{\A}_{n+l}x} \A_n h \right), 
\end{align*}
which completes the proof. 
\end{proof}

\begin{remark}{\label{rmk:rightcoset-decomp}} 
For $l=0$ Proposition \ref{prop:rightcoset-rep-gen} reduces to the statement that a set of representatives for the right cosets $\A_{n}\backslash \A_{n+1}$ is given by $\widehat{\A}_{n}\sqcup\widehat{\A}_{n}{\beta}_{n+1}$.
\end{remark}

\begin{lemma}{\label{center}}
The center of $\A_n$ is 
    \[
    Z(\A_n) = \{\id, (1,2)(3,4)\cdots(2^n-1,2^n)\}.
    \] 
\end{lemma}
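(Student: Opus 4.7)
The plan is to proceed by induction on $n$, exploiting the wreath product decomposition $A_n \cong (A_{n-1} \times \widehat{A}_{n-1}) \rtimes \langle \beta_n \rangle$ implicit in the $l = 0$ case of Proposition~\ref{prop:decomp-An}. The base case $n = 1$ is immediate since $A_1 = S_2$ is abelian and $(2^1 - 1, 2^1) = (1,2)$.

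Set $c_n := (1,2)(3,4)\cdots(2^n-1, 2^n)$ and decompose $c_n = c_{n-1}\widehat{c}_{n-1}$, where $c_{n-1}$ is the analogous element of $A_{n-1}$ and $\widehat{c}_{n-1}$ is its image under the label shift $i \mapsto 2^{n-1} + i$. To see $c_n \in Z(A_n)$, observe that by the inductive hypothesis $c_{n-1} \in Z(A_{n-1})$ and $\widehat{c}_{n-1} \in Z(\widehat{A}_{n-1})$; by Lemma~\ref{enum:labelsets-L1L2} each of them commutes with the opposite factor; and by Lemma~\ref{enum:conj-beta} one has $\beta_n c_n \beta_n^{-1} = \widehat{c}_{n-1} c_{n-1} = c_n$. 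Hence $c_n$ commutes with the generating set $\{\beta_1, \ldots, \beta_n\}$ and therefore lies in $Z(A_n)$.

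For the reverse inclusion, take $z \in Z(A_n)$ and use Proposition~\ref{prop:decomp-An} (with $l = 0$) to write $z = a b \beta_n^{\epsilon}$ uniquely with $a \in A_{n-1}$, $b \in \widehat{A}_{n-1}$, and $\epsilon \in \{0, 1\}$. In the case $\epsilon = 0$, commutation of $z$ with an arbitrary $g \in A_{n-1}$, combined with Lemma~\ref{enum:labelsets-L1L2}, reduces to $g a g^{-1} = a$, so $a \in Z(A_{n-1}) = \{\id, c_{n-1}\}$ by the inductive hypothesis, and symmetrically $b \in Z(\widehat{A}_{n-1})$. Commutation with $\beta_n$ together with Lemma~\ref{enum:conj-beta} forces $b = \widehat{a}$, leaving only $z \in \{\id, c_n\}$. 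In the case $\epsilon = 1$, testing commutation with any generator $\beta_i \in A_{n-1}$ (which exists when $n \geq 2$) and using $\beta_n \beta_i \beta_n^{-1} = \widehat{\beta_i}$ together with the uniqueness of the factorization, the identity $\beta_i z = z \beta_i$ degenerates to $\beta_i = \id$, a contradiction; the case $n = 1$ already being dispatched as the base case.

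The main technical hurdle is bookkeeping inside the semidirect product: one must consistently combine the commutation $A_{n-1}\widehat{A}_{n-1} = \widehat{A}_{n-1}A_{n-1}$ from Lemma~\ref{enum:labelsets-L1L2} with the unique factorization of Proposition~\ref{prop:decomp-An} to decouple each centralizing equation into independent conditions on $a$, $b$, and $\epsilon$.
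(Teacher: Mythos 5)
Your proof is correct, but it takes a genuinely different route from the paper. The paper argues directly: given $\alpha\in Z(A_n)$, it conjugates each generator $\beta_k$ by $\alpha$ and uses the relabeling identity $\alpha\beta_k\alpha^{-1}=\prod(\alpha(i),\alpha(\cdot))$ to build up constraints on $\alpha$ level by level, starting from $\beta_1$ and propagating upward, until only $\id$ and $(1,2)(3,4)\cdots(2^n-1,2^n)$ survive. You instead run an induction on $n$, exploiting the wreath decomposition $A_n = A_{n-1}\widehat{A}_{n-1}\sqcup A_{n-1}\widehat{A}_{n-1}\beta_n$ from Proposition~\ref{prop:decomp-An}, reducing the containment $c_n\in Z(A_n)$ to the inductive hypothesis $c_{n-1}\in Z(A_{n-1})$ plus Lemmas~\ref{enum:labelsets-L1L2} and \ref{enum:conj-beta}, and killing the $\beta_n$-coset by the uniqueness of the factorization $z=ab\beta_n^{\epsilon}$. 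Your approach is more structural and modular, and it recycles the machinery of Section~\ref{section:structural-properties} rather than re-deriving a combinatorial constraint chain; the paper's approach is more elementary and self-contained, needing only the generating set and the relabeling effect of conjugation. One small thing worth making explicit: Proposition~\ref{prop:decomp-An} gives a disjoint-union decomposition, and the \emph{uniqueness} of $a$, $b$, $\epsilon$ you invoke follows from the additional facts that $A_{n-1}\cap\widehat{A}_{n-1}=\{\id\}$ and the two factors commute (so $A_{n-1}\widehat{A}_{n-1}$ is an internal direct product); you lean on this uniqueness crucially in the $\epsilon=1$ case, so it should be stated, not just gestured at in the closing remark.
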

\begin{proof}
    Let $\alpha \in Z(\A_n)$ and consider the generating set $B=\{ {\beta}_1,{\beta}_2,\dots,{\beta}_n \}$ of $\A_n$ where ${\beta}_k=\displaystyle{\prod_{i=1}^{2^k}}(i,2^k+i) $.  We know that $\alpha$ should satisfy $\alpha {\beta}_k \alpha^{-1}={\beta}_k$ for all ${\beta}_k\in B$. Conjugation by a permutation has the effect of relabeling, hence we get 
    \begin{equation}{\label{relabel}}
        \alpha {\beta}_k \alpha^{-1}= \prod_{i=1}^{2^k}(\alpha(i),\alpha(2^k+i))=\prod_{i=1}^{2^k}(i,2^k+i)={\beta}_k.
    \end{equation}
    This equation puts a restriction on $\alpha$ for each $k\in \{1,\dots, n\}$.
    
    For $k=1$, we get $(1,2)=(\alpha(1),\alpha(2))$, so either $\alpha$ fixes $1$ and $2$, or it swaps $1$ and $2$. 
    
    For $k=2$, we get $(1,3)(2,4)=(\alpha(1),\alpha(3))(\alpha(2),\alpha(4))$, so if $\alpha$ fixes $1,2$, we must have $(1,3)(2,4)=(1,\alpha(3))(2,\alpha(4))$, which forces $\alpha$ to fix $3$ and $4$ as well. Otherwise, if it swaps $1$ and $2$, we obtain $(1,3)(2,4)=(2,\alpha(3))(1,\alpha(4))$, which forces $\alpha$ to swap $3$ and $4$ as well. 
    
    Since Equation~\eqref{relabel} holds for $k=0,1,\dots,n-1$, repeating similar arguments, we get that either $\alpha=\id$, or $\alpha=(1,2)(3,4)\cdots(2^n-1,2^n)$.
\end{proof}

Define the centralizer of $\A_n$ in $\A_{n+k}$ as 
\begin{equation}
\label{eqn:centralizer-group}
Z_{n,k}:=Z(\A_{n+k},\A_n)=\left\{g\in \A_{n+k}: gh = hg    \mbox{ for all }  h\in \A_n\right\}.      
\end{equation}
We note that the usual notation in group theory for the centralizer is $C_{\A_{n+k}}(\A_n)$.

\begin{lemma}{\label{centralizer}}
   The centralizer $Z_{n,1}$ of $\A_n$ in $\A_{n+1}$ is equal to $Z(\A_n)\widehat{\A}_n$.
\end{lemma}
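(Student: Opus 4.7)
The plan is to prove the two inclusions separately, using the coset decomposition from Remark~\ref{rmk:rightcoset-decomp} to control the analysis.

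For the easy inclusion $Z(A_n)\widehat{A}_n\subseteq Z_{n,1}$, I would note that elements of $Z(A_n)$ commute with all of $A_n$ by definition, and elements of $\widehat{A}_n$ commute with all of $A_n$ by Lemma~\ref{enum:labelsets-L1L2}, since the two subgroups act on disjoint label sets. So any product $z\hat{a}$ with $z\in Z(A_n)$ and $\hat{a}\in\widehat{A}_n$ centralizes $A_n$.

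For the reverse inclusion $Z_{n,1}\subseteq Z(A_n)\widehat{A}_n$, I would use Remark~\ref{rmk:rightcoset-decomp}, which gives $A_{n+1}=A_n\widehat{A}_n\sqcup A_n\widehat{A}_n\beta_{n+1}$. So every $g\in Z_{n,1}$ has one of two forms, and I analyze each case separately.

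\textbf{Case 1:} $g=a\hat{a}$ with $a\in A_n$, $\hat{a}\in\widehat{A}_n$. For any $h\in A_n$, using that $\hat{a}$ commutes with $h$, one has $gh=a h\hat{a}$ and $hg=ha\hat{a}$; so $gh=hg$ forces $ah=ha$ for every $h\in A_n$, i.e., $a\in Z(A_n)$ and $g\in Z(A_n)\widehat{A}_n$.

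\textbf{Case 2:} $g=a\hat{a}\beta_{n+1}$. Here Lemma~\ref{enum:conj-beta} is essential: conjugation by $\beta_{n+1}$ identifies $A_n$ with $\widehat{A}_n$, sending $h\in A_n$ to some $\hat{h}\in\widehat{A}_n$. Rewriting $\beta_{n+1}h=\hat{h}\beta_{n+1}$, the commutation equation $gh=hg$ becomes $a\hat{a}\hat{h}=ha\hat{a}$, or equivalently $a\,\hat{a}\hat{h}\hat{a}^{-1}=ha$. The element $\hat{a}\hat{h}\hat{a}^{-1}\in\widehat{A}_n$ acts only on the labels $\{2^n+1,\ldots,2^{n+1}\}$, while $ha$ acts only on $\{1,\ldots,2^n\}$. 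Comparing the actions on the two disjoint label sets forces $\hat{a}\hat{h}\hat{a}^{-1}=\id$ and $ha=a$, i.e., $h=\id$. Since this must hold for \emph{all} $h\in A_n$, and $A_n$ is nontrivial for $n\geq 1$, Case 2 contributes no elements to $Z_{n,1}$ (for $n=0$ the statement is trivial).

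The main subtlety is Case 2: it is tempting to suspect that some $g$ involving $\beta_{n+1}$ might centralize $A_n$, but the disjoint-support argument rules this out cleanly once one has decomposed $g\cdot h$ and $h\cdot g$ into pieces living on $L_1$ and $L_2$. This should be the only step requiring careful bookkeeping; the rest is direct application of the coset decomposition and Lemmas~\ref{enum:labelsets-L1L2} and \ref{enum:conj-beta}.
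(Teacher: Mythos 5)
Your proof is correct, but it takes a genuinely different route from the paper's. The paper proves the hard inclusion $Z_{n,1}\subseteq Z(A_n)\widehat{A}_n$ by directly examining the conjugation relations $\alpha\beta_k\alpha^{-1}=\beta_k$ for the generators $\beta_k$, exactly mirroring the ``relabelling'' argument used to prove Lemma~\ref{center}: these relations force $\alpha$ to act on the labels $\{1,\ldots,2^n\}$ either trivially or by the product $(1,2)(3,4)\cdots(2^n-1,2^n)$, while imposing no constraint on $\{2^n+1,\ldots,2^{n+1}\}$. You instead bring in the coset decomposition $A_{n+1}=A_n\widehat{A}_n\sqcup A_n\widehat{A}_n\beta_{n+1}$ (Proposition~\ref{prop:decomp-An} with $l=0$ is the cleaner citation for the group decomposition, rather than Remark~\ref{rmk:rightcoset-decomp}, which only gives coset representatives) and argue case by case, using a disjoint-support comparison to rule out the $\beta_{n+1}$-coset in Case 2. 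Your approach is more structural — it reuses the decomposition machinery already established and makes explicit \emph{why} the $\beta_{n+1}$-part contributes nothing, which in the paper's version is implicit in the ``no restrictions on $\{2^n+1,\ldots,2^{n+1}\}$'' claim. The paper's approach is shorter and stays closer to the proof of the center lemma, so the two proofs trade brevity for transparency. One minor bookkeeping note: in Case 2 you decompose $a\hat{a}\hat{h}\hat{a}^{-1}=ha$ into $L_1$- and $L_2$-supported pieces and get $h=\id$ from \emph{both} the $L_1$ comparison ($a=ha$) and the $L_2$ comparison ($\hat{a}\hat{h}\hat{a}^{-1}=\id$); either one alone already finishes that case, so you have redundancy there, which is fine but worth tightening if you write it up.
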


\begin{proof}
    Again, it will suffice to work with the generators $B=\{ {\beta}_1,{\beta}_2,\dots,{\beta}_n\}$ of $\A_n$. 
    Let $\alpha=ab$, where $a\in Z(\A_n)$ and $b\in \widehat{\A}_n$. Then $\alpha {\beta}_k = ab {\beta}_k = a {\beta}_k b = {\beta}_k ab = {\beta}_k \alpha$, 
    where the second equality holds since $\widehat{\A}_n$ commutes with $\A_n$ (see Lemma~\ref{enum:labelsets-L1L2}) and the third equality holds since $a$ is in the center of $\A_n$. 
     
    On the other hand, for $\alpha \in Z_{n,1}$, the condition $\alpha {\beta}_k \alpha^{-1}={\beta}_k$ for $k\in\{1,\dots,n\}$ forces $\alpha$ to either fix $1,2,\dots,2^n$, or to swap $1$ and $2$, $3$ and $4$,$\dots$, $2^{n}-1$ and $2^n$, and there are no restrictions on the labels $\{2^n+1,\dots,2^{n+1}\}$. Since $\widehat{\A}_n$ is the subgroup of $\A_{n+1}$ that permutes these labels in any possible way, $\alpha$ is of the form $ab$, where $a\in Z(\A_n)$ and $b \in \widehat{\A}_n$. Therefore $Z_{n,1}=Z(\A_n)\widehat{\A}_n$.
\end{proof}    

\begin{lemma}{\label{bigdoublecoset}}
$\A_n {\beta}_{n+1} \A_n$ is an $(\A_n,\A_n)$-double coset of $\A_{n+1}$ with $|\A_n|^2$ elements.
\end{lemma}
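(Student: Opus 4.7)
The statement that $A_n\beta_{n+1}A_n$ is an $(A_n,A_n)$-double coset is immediate from the definition of a double coset, so the substance of the claim is the cardinality calculation. My plan is to exhibit a natural surjection $A_n\times A_n \twoheadrightarrow A_n\beta_{n+1}A_n$ and prove it is a bijection, which will give $|A_n\beta_{n+1}A_n|=|A_n|^2$.

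Concretely, consider the map $\varphi:A_n\times A_n\to A_n\beta_{n+1}A_n$ sending $(g,h)\mapsto g\beta_{n+1}h$. Surjectivity is clear. For injectivity, suppose $g_1\beta_{n+1}h_1=g_2\beta_{n+1}h_2$ with $g_i,h_i\in A_n$. Rearranging gives
\[
g_2^{-1}g_1 \;=\; \beta_{n+1}\,h_2h_1^{-1}\,\beta_{n+1}^{-1}.
\]
The left-hand side lies in $A_n$. By Lemma~\ref{enum:conj-beta} (and because $\beta_{n+1}$ has order two), the right-hand side lies in $\widehat{A}_n$. Thus the element lies in $A_n\cap \widehat{A}_n$.

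The key step is then to observe that $A_n\cap \widehat{A}_n=\{\id\}$. This follows directly from Lemma~\ref{enum:labelsets-L1L2}: $A_n$ only moves labels in $L_1=\{1,\ldots,2^n\}$ while $\widehat{A}_n$ only moves labels in $L_2=\{2^n+1,\ldots,2^{n+1}\}$, so any element common to both must fix every label. Consequently $g_2^{-1}g_1=\id$ and $h_2h_1^{-1}=\id$, i.e.\ $g_1=g_2$ and $h_1=h_2$, proving that $\varphi$ is injective. Hence $|A_n\beta_{n+1}A_n|=|A_n\times A_n|=|A_n|^2$.

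There is no serious obstacle here; the only conceptual content is identifying the stabilizer $A_n\cap\beta_{n+1}A_n\beta_{n+1}^{-1}$ with $A_n\cap\widehat{A}_n$ via Lemma~\ref{enum:conj-beta}, and then using the disjoint-support observation from Lemma~\ref{enum:labelsets-L1L2} to see that this intersection is trivial. An equivalent phrasing, if preferred, is to invoke the standard orbit-stabilizer formula $|A_ngA_n|=|A_n|^2/|A_n\cap gA_ng^{-1}|$ with $g=\beta_{n+1}$.
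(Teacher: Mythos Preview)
Your proof is correct and is essentially the same argument as the paper's: both set up the equation $g_2^{-1}g_1=\beta_{n+1}h_2h_1^{-1}\beta_{n+1}^{-1}$, observe that the left side lies in $A_n$ while the right side lies in $\widehat{A}_n$, and use disjoint supports to conclude both sides are the identity. Your version is slightly more explicit in citing Lemmas~\ref{enum:labelsets-L1L2} and~\ref{enum:conj-beta} and in mentioning the orbit--stabilizer reformulation, but there is no substantive difference.
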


\begin{proof}
    We want to show that the elements $x{\beta}_{n+1} y$ are all distinct for $x,y\in \A_n$. 
    So suppose $x{\beta}_{n+1} y = x'{\beta}_{n+1} y'$, where $x,y,x',y'\in \A_n$. 
    Then $(x')^{-1}x={\beta}_{n+1}y'y^{-1}{\beta}_{n+1}^{-1}$. However, conjugation by ${\beta}_{n+1}$ has the effect of sending labels $\{1,2,\dots,2^n\}$ to $\{2^n+1,\dots,2^{n+1}\}$. 
    So the permutations $(x')^{-1}x$ and ${\beta}_{n+1}y'y^{-1}{\beta}_{n+1}^{-1}$ are acting on disjoint sets in a partition of all the labels for $\A_{n+1}$. So $(x')^{-1}x={\beta}_{n+1}y'y^{-1}{\beta}_{n+1}^{-1}=\id$ implies $x=x'$ and $y=y'$. This proves that $x{\beta}_{n+1} y$ are all distinct for $x,y\in \A_n$. Hence $\A_n {\beta}_{n+1} \A_n$ is a $(\A_n,\A_n)$-double coset of $\A_{n+1}$ with $|\A_n|^2$ elements.
\end{proof}

\begin{proposition}{\label{doublecosets}}
The elements $\widehat{\A}_n\cup\{{\beta}_{n+1}\}$ form a  complete set of representatives for $(\A_n,\A_n)$-double cosets of $\A_{n+1}$.
\end{proposition}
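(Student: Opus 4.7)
The plan is to count and match: I want to exhibit one double coset per element of $\widehat{A}_n \cup \{\beta_{n+1}\}$, show these are pairwise disjoint, and verify the sizes add up to $|A_{n+1}| = 2|A_n|^2$. The backbone is the case $l=0$ of Proposition~\ref{prop:decomp-An}, namely $A_{n+1} = A_n\widehat{A}_n \sqcup A_n\widehat{A}_n\beta_{n+1}$, so the two families of candidate representatives live in disjoint halves of $A_{n+1}$ and there is no chance of them colliding with each other.

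First I would handle the representatives coming from $\widehat{A}_n$. For any $\hat{a}\in\widehat{A}_n$, Lemma~\ref{enum:labelsets-L1L2} gives that $\hat{a}$ commutes with every element of $A_n$, so
\[
A_n\,\hat{a}\,A_n \;=\; A_n A_n\,\hat{a} \;=\; A_n\,\hat{a},
\]
which is a single left coset of size $|A_n|$. I would then argue that distinct $\hat{a},\hat{a}'\in\widehat{A}_n$ produce distinct double cosets: $A_n\hat{a} = A_n\hat{a}'$ forces $\hat{a}(\hat{a}')^{-1}\in A_n\cap\widehat{A}_n$, but these two subgroups act on disjoint label sets $\{1,\dots,2^n\}$ and $\{2^n+1,\dots,2^{n+1}\}$, so their intersection is trivial. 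This yields $|\widehat{A}_n|=|A_n|$ pairwise distinct double cosets, each of size $|A_n|$, whose union is exactly $A_n\widehat{A}_n$ (it has $|A_n|^2$ elements total, since $A_n$ and $\widehat{A}_n$ intersect trivially and commute).

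Next I would address $\beta_{n+1}$. Lemma~\ref{bigdoublecoset} already tells me that $A_n\beta_{n+1}A_n$ is a double coset with exactly $|A_n|^2$ elements. Using $\beta_{n+1}A_n\beta_{n+1}^{-1}=\widehat{A}_n$ (Lemma~\ref{enum:conj-beta}), I can rewrite $A_n\beta_{n+1}A_n = A_n\widehat{A}_n\beta_{n+1}$, which is precisely the second piece of the decomposition in Proposition~\ref{prop:decomp-An}. So this double coset is disjoint from every $A_n\hat{a}A_n$ above.

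Finally I would add the sizes: $|A_n|$ double cosets of size $|A_n|$ plus one of size $|A_n|^2$ gives $|A_n|^2 + |A_n|^2 = 2|A_n|^2 = |A_{n+1}|$. Since double cosets partition the group, equality of cardinalities forces the listed representatives to be a complete and irredundant set. I do not expect any real obstacle here; the only subtlety is the trivial-intersection fact $A_n\cap\widehat{A}_n=\{\id\}$ used to separate the small cosets, but this follows immediately from the disjoint-label description of $A_n$ and $\widehat{A}_n$.
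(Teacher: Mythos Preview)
Your proof is correct and follows essentially the same approach as the paper's: both arguments reduce the small double cosets $A_n\hat{a}A_n$ to single cosets $A_n\hat{a}$ via commutativity, invoke Lemma~\ref{bigdoublecoset} for the large coset $A_n\beta_{n+1}A_n$, and finish by a cardinality count against $|A_{n+1}|=2|A_n|^2$. Your version is slightly more explicit in two places---you spell out $A_n\cap\widehat{A}_n=\{\id\}$ to separate the small cosets, and you cite the decomposition $A_{n+1}=A_n\widehat{A}_n\sqcup A_n\widehat{A}_n\beta_{n+1}$ from Proposition~\ref{prop:decomp-An} to keep the two families disjoint---whereas the paper handles these points more informally (appealing to the tree picture and to Lemma~\ref{centralizer}), but the logical skeleton is the same.
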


\begin{proof} 
We will provide a proof by showing that each of these representatives gives disjoint double cosets and by using a counting argument, \textit{i.e.}, their total cardinality is equal to the cardinality of $\A_{n+1}$.

For $b \in \widehat{\A}_n\subset Z_{n,1}$, we get $\A_n b \A_n=b \A_n$ since elements in $\A_n$ and $\widehat{\A}_n$ commute. This gives us $|\A_n|$ disjoint double cosets (they are disjoint because each $b$ corresponds to a different swap at the right branch of the tree $T_{n+1}$), each with $|\A_n|$ elements. So the double cosets $\A_nb\A_n$ for $b\in \widehat{\A}_n$ count for $|\A_n||\A_n|$ many elements of $\A_{n+1}$.

Since the cardinality of $\A_{n+1}$ is $|\A_{n+1}| = |\A_n \wr S_2| = |(\A_n \times \A_n)\rtimes S_2| 
= 2|\A_n|^2$, there are still $2|\A_n|^2-|\A_n||\A_n|$ elements we have to account for.

    Note that $\A_{n+1} 
    = \A_n \widehat{\A}_n \A_n \sqcup \A_n{\beta} \A_n
    = \A_n \widehat{\A}_n \sqcup \A_n{\beta} \A_n$.
    It follows from 
    Lemma~\ref{centralizer} that each element of $\widehat{\A}_n$ gives a distinct double coset of size $|\A_n|$, and it follows from Lemma~\ref{bigdoublecoset} that ${\beta}_{n+1}$ gives a double coset of size $|\A_n|^2$. This counts for all the elements of $\A_{n+1}$.
\end{proof}

\begin{proposition}
\label{prop:basis-orbit-sum}
A basis of the subalgebra $\GZ{n,k}$ is given by the orbit sums of the conjugation action $\A_{n} \curvearrowright \A_{n+k}$.
\end{proposition}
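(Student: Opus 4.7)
The plan is to establish the classical fact that the centralizer of a finite group algebra under a subgroup's conjugation action is spanned by the orbit sums of that action. I interpret $\GZ{n,k}$ here as the subalgebra $\{x \in \mathbb{C}A_{n+k} : xh = hx \text{ for all } h \in A_n\}$ of $\mathbb{C}A_{n+k}$, which coincides with the fixed-point set of the extended conjugation action $h \cdot x := hxh^{-1}$ of $A_n$ on $\mathbb{C}A_{n+k}$.

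First I would expand an arbitrary element $x = \sum_{g \in A_{n+k}} c_g g$ and rewrite the centralizing condition $hx = xh$ for every $h \in A_n$ as $hxh^{-1} = x$. Comparing coefficients in the group basis on both sides of the equality $\sum_g c_g (hgh^{-1}) = \sum_g c_g g$ yields $c_{hgh^{-1}} = c_g$ for every $h \in A_n$ and $g \in A_{n+k}$. Hence the coefficient function $g \mapsto c_g$ is constant on each orbit of the conjugation action $A_n \curvearrowright A_{n+k}$, which shows that $x$ is a $\mathbb{C}$-linear combination of the orbit sums $\orbitsum_O := \sum_{g \in O} g$. Linear independence of these orbit sums is immediate since distinct orbits partition $A_{n+k}$, so the corresponding orbit sums have pairwise disjoint supports in the standard group basis of $\mathbb{C}A_{n+k}$.

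I do not foresee a real obstacle here, as this is a standard representation-theoretic observation. The only point warranting care is the notational one: clarifying that in this proposition $\GZ{n,k}$ refers to the centralizer subalgebra of $\mathbb{C}A_{n+k}$, not to the group algebra of the abstract centralizer subgroup defined in \eqref{eqn:centralizer-group}; the word \emph{subalgebra} in the statement already signals this convention.
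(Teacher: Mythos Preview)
Your proof is correct and follows essentially the same approach as the paper: both arguments identify the centralizing condition with constancy of coefficients on $A_n$-conjugation orbits and conclude that orbit sums span, with linear independence coming from disjoint supports. Your remark about the notational overload of $\GZ{n,k}$ (centralizer subalgebra versus group algebra of the centralizer group) is well taken and matches how the paper actually uses the symbol in this proposition and later.
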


This is an application of the classical yet extremely powerful idea of averaging over group elements ubiquitous in representation theory. We will now prove Proposition~\ref{prop:basis-orbit-sum}.

\begin{proof}
    Let $\orbit$ be an orbit and let $\displaystyle v_{\orbit}=\sum_{g\in \orbit}g$. Then for any $a\in \A_{n}$, we have 
    \begin{equation*}
        av_{\orbit} = a\left(\sum_{g\in \orbit}g\right)
        =\sum_{g\in \orbit}ag
        =\sum_{g'\in \orbit}g'a
        =\left(\sum_{g'\in \orbit}g'\right)a=v_{\orbit}a.
    \end{equation*}
    Hence $v_{\orbit} \in \GZ{n,k}$.
    
    Conversely, let $v=c_1v_{h_1}+\ldots+c_kv_{h_k}\in Z_{n,k}$ where $\{v_{h_i}\}_{h_i\in \A_{n+k}}$ is a basis of $\GA{n+1}$ and suppose that $vv_{g}=v_{g}v$ for all $g\in i(\A_{n})$. Then we have \begin{equation*}
        \sum_{i=1}^kc_iv_{h_i}=v=v_{g}vv_{g^{-1}}=\sum_{i=1}^kc_iv_{g}v_{h_i}v_{g^{-1}}=\sum_{i=1}^kc_iv_{gh_ig^{-1}}
    \end{equation*}
    Since this is true for all $g\in \A_{n}$, all the basis vectors $v_{h_j}$ for $h_j$ in the orbit of $h_i$ should be present in $v$ for all $i=1,2,\ldots,k$. Moreover in order for the above equation to hold, one needs the coefficients of the basis vectors indexed by elements from the same orbit to be equal to each other. Therefore $\{v_{\orbit}\}_{\orbit\text{ orbit}}$ is a basis of $\GZ{n,k}$.
\end{proof}

There is a special element whose orbit exhibits a  notable behaviour. The orbit of ${\beta}_{n+1}$ is the same under the conjugation actions of $\A_n$ and $\A_{n+1}$.
\begin{proposition}
\label{prop:orbitn-orbitnplus1}
Let  $\orbit_{n}({\beta}_{n+1}):=\{h{\beta}_{n+1}h^{-1}: h\in \A_n\}$ be the orbit of ${\beta}_{n+1}$ under the conjugation action of $\A_n$ on $\A_{n+1}$, and let $\orbit_{n+1}({\beta}_{n+1}) :=\{g{\beta}_{n+1}g^{-1}: g\in \A_{n+1}\}$ be the orbit of ${\beta}_{n+1}$ under the conjugation action of $\A_{n+1}$ on $\A_{n+1}$, \textit{i.e.}, the conjugacy class of ${\beta}_{n+1}$ in $\A_{n+1}$. Then $\orbit_{n}({\beta}_{n+1})= \orbit_{n+1}({\beta}_{n+1})$.
\end{proposition}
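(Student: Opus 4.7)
The plan is to prove the nontrivial inclusion $\orbit_{n+1}(\beta_{n+1}) \subseteq \orbit_{n}(\beta_{n+1})$; the reverse containment is immediate since $A_n \subseteq A_{n+1}$.

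First, I would invoke Proposition~\ref{prop:decomp-An} with $l=0$, giving the disjoint decomposition $A_{n+1} = A_n\widehat{A}_n \sqcup A_n\widehat{A}_n\beta_{n+1}$. Thus any $g \in A_{n+1}$ can be written either as $g = ab$ or as $g = ab\beta_{n+1}$ with $a \in A_n$ and $b \in \widehat{A}_n$. Using $\beta_{n+1}^2 = \id$, a short computation shows that in either case
\[
g\beta_{n+1}g^{-1} \;=\; ab\,\beta_{n+1}\,b^{-1}a^{-1}.
\]
So it suffices to reduce to the problem of understanding $b\beta_{n+1}b^{-1}$ for $b \in \widehat{A}_n$.

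Next, the crux is to show $b\beta_{n+1}b^{-1} \in \orbit_n(\beta_{n+1})$. By Lemma~\ref{enum:conj-beta} together with $\beta_{n+1}^2 = \id$, conjugation by $\beta_{n+1}$ realizes an isomorphism $A_n \xrightarrow{\sim} \widehat{A}_n$, so we may write $b = \beta_{n+1} a' \beta_{n+1}$ for some $a' \in A_n$. Substituting and cancelling the square of $\beta_{n+1}$ gives
\[
b\beta_{n+1}b^{-1} \;=\; \beta_{n+1}\,a'\,\beta_{n+1}\,(a')^{-1}\,\beta_{n+1}.
\]
Now using $b\beta_{n+1} = \beta_{n+1} a'$ (which is just a rearrangement of $b = \beta_{n+1}a'\beta_{n+1}$) along with Lemma~\ref{enum:labelsets-L1L2} to commute $a$-type with $b$-type elements where needed, this collapses to $(a')^{-1}\beta_{n+1}\,a'$, an element of $\orbit_n(\beta_{n+1})$.

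Plugging this back into the reduction above, we get
\[
g\beta_{n+1}g^{-1} \;=\; a\bigl((a')^{-1}\beta_{n+1}\,a'\bigr)a^{-1} \;=\; (a(a')^{-1})\,\beta_{n+1}\,(a(a')^{-1})^{-1},
\]
which lies in $\orbit_n(\beta_{n+1})$ since $a(a')^{-1} \in A_n$. I do not expect any real obstacle; the whole argument is an exercise in the coset decomposition of Proposition~\ref{prop:decomp-An} together with the $\beta_{n+1}$-conjugation identity of Lemma~\ref{enum:conj-beta}, and the only thing to be careful about is keeping track of when one is in $A_n$ versus $\widehat{A}_n$ so that the commutativity of Lemma~\ref{enum:labelsets-L1L2} can be applied.
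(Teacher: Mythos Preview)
Your argument is correct. The reduction via the coset decomposition $A_{n+1}=A_n\widehat{A}_n\sqcup A_n\widehat{A}_n\beta_{n+1}$ is clean, and the key computation does collapse as claimed: from $b\beta_{n+1}b^{-1}$ with $b=\beta_{n+1}a'\beta_{n+1}$ one has $\beta_{n+1}b^{-1}=(a')^{-1}\beta_{n+1}$, hence $b\beta_{n+1}b^{-1}=b(a')^{-1}\beta_{n+1}=(a')^{-1}b\beta_{n+1}=(a')^{-1}\beta_{n+1}a'$, using the commutativity of $A_n$ with $\widehat{A}_n$ exactly once.

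Your route differs from the paper's. The paper argues via the permutation description: $\beta_{n+1}$ is a product of transpositions $(\ell_1,\ell_2)$ with $\ell_1\in\{1,\dots,2^n\}$ and $\ell_2\in\{2^n+1,\dots,2^{n+1}\}$, and any $A_{n+1}$-conjugate remains a product of transpositions of this mixed type, all of which can already be reached by conjugating with elements of $A_n$ (which relabel only the $\ell_1$'s). Your approach is purely group-theoretic, relying only on Proposition~\ref{prop:decomp-An} and Lemmas~\ref{enum:labelsets-L1L2} and~\ref{enum:conj-beta}; it avoids the permutation picture entirely and is more self-contained (and arguably more careful about why every $A_{n+1}$-conjugate is actually hit). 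The paper's version, on the other hand, gives a concrete description of what the conjugates look like as permutations, which is useful intuition for the later orbit computations.
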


\begin{proof}
    Note that ${\beta}_{n+1}$ is a product of transpositions of the form $(\ell_1,\ell_2)$ where $\ell_1\in L_1=\{1,\dots,2^n\}$, $\ell_2\in L_2=\{2^n+1,\dots,2^{n+1}\}$. Conjugation by $h\in \A_n$ has the effect of permuting only the labels from the set $L_1$.
    
    Conjugation by $g\in \A_{n+1}$ affects both the labels $\ell_1$ and $\ell_2$. Every $g$ can be written as $h{\beta}_{n+1}k$ for $h\in \A_n$ and $k\in \widehat{\A}_n$. Here only ${\beta}_{n+1}$ can exchange labels between $L_1$ and $L_2$, however note that no matter which element of $\A_{n+1}$ we choose, we can never obtain a permutation containing $(\ell_1,\ell_1')$ since if ${\beta}_{n+1}(\ell_2)$ is in $L_1$, then ${\beta}_{n+1}(\ell_1)$ should be in $L_2$.
    
    Therefore conjugates of ${\beta}_{n+1}$ are products of transpositions $(\ell_1,\ell_2)$, which can all be obtained by conjugation with $h\in \A_n$. Hence $\orbit_{n}({\beta}_{n+1})= \orbit_{n+1}({\beta}_{n+1})$.
\end{proof}

\begin{remark}{\label{rmk:orbitsum_beta_n_central}}
Let $\orbitsum_n(g)$ denote the sum of the elements in the orbit $\orbit_n(g)$, \textit{i.e.}, $\orbitsum_n(g) = \displaystyle{\sum_{g\in \orbit_n(g)}}g$. 
Note that $\orbitsum_{n+1}({\beta}_{n+1})$ is a central element of $\GA{n+1}$ since it is a conjugacy class sum. As a result, we get that $\orbitsum_{n}({\beta}_{n+1})$ is also central in $\GA{n+1}$.
\end{remark}

Now we are in a position to give an explicit vector space basis of $\GZ{n,1}$.

    \begin{proposition}{\label{orbits}}
    The number of orbits of the conjugation action $\A_{n} \curvearrowright \A_{n+1}$ is $|\A_{n}|\times (c_n+1)$ where $c_n$ is the number of conjugacy classes of $\A_n$. Moreover, if we denote by $C_n(g)$ the conjugacy class of $g\in \A_n$, then the orbits are $\left\{C_n(g)h\right\}_{g\in \A_n,h \in \widehat{\A}_n}$  and $\left\{\orbit_n({\beta}_{n+1})h\right\}_{ h\in \widehat{\A}_n}$. 
    \end{proposition}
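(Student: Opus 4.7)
The plan is to partition $A_{n+1}$ into the $(A_n,A_n)$-double cosets provided by Proposition~\ref{doublecosets} and then analyze the $A_n$-conjugation orbits within each double coset separately. The starting observation is that conjugation by $A_n$ preserves double cosets: for any $a \in A_n$ and $g \in A_{n+1}$ we have $a g a^{-1} \in A_n g A_n$. Hence it suffices to count and describe the orbits inside $A_n h = A_n h A_n$ for each $h \in \widehat{A}_n$ and inside the large double coset $A_n \beta_{n+1} A_n$.

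For a coset $A_n h$ with $h \in \widehat{A}_n$, Lemma~\ref{enum:labelsets-L1L2} says that $h$ commutes with every $b \in A_n$, so $b(ah)b^{-1} = (b a b^{-1}) h$. Thus the orbit of $ah$ is exactly $C_n(a)\,h$, giving $c_n$ orbits per $h$ and $|\widehat{A}_n|\cdot c_n = |A_n|\,c_n$ orbits in total from this family. These are the orbits of the first type in the statement.

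For the big double coset $A_n \beta_{n+1} A_n$, the uniqueness in Lemma~\ref{bigdoublecoset} lets me write each element uniquely as $a\beta_{n+1}b$ with $a,b \in A_n$; conjugation by $c \in A_n$ sends this to $(ca)\beta_{n+1}(bc^{-1})$, which is again in the unique-decomposition form. A direct computation shows that $(bc^{-1})(ca) = ba$, so the product $ba \in A_n$ is invariant along the orbit. Conversely, if $b_1 a_1 = b_2 a_2$ then $c := a_2 a_1^{-1}$ intertwines $a_1 \beta_{n+1} b_1$ and $a_2 \beta_{n+1} b_2$. Hence the orbits in $A_n \beta_{n+1} A_n$ are in bijection with $A_n$ via $ba$, producing exactly $|A_n|$ orbits.

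Finally, to identify these $|A_n|$ orbits with the sets $\mathcal{O}_n(\beta_{n+1})\,h$, I would use Lemma~\ref{enum:conj-beta} to pick, for each $h \in \widehat{A}_n$, the unique $k \in A_n$ with $\beta_{n+1} h = k \beta_{n+1}$. Since $c^{-1} h = h c^{-1}$, one computes $c\beta_{n+1} c^{-1} h = c\beta_{n+1} h c^{-1} = (ck)\beta_{n+1} c^{-1}$, whose invariant equals $c^{-1}(ck) = k$. Because $h \mapsto k$ is a bijection $\widehat{A}_n \to A_n$, the sets $\mathcal{O}_n(\beta_{n+1})\,h$ are pairwise disjoint and exhaust all $|A_n|$ orbits in the big double coset; transitivity of the $A_n$-action on each of them follows again from $h$ commuting with $A_n$. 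Summing the contributions gives $|A_n|c_n + |A_n| = |A_n|(c_n + 1)$. The main technical obstacle I anticipate is the bookkeeping inside $A_n\beta_{n+1}A_n$: extracting the invariant $ba$ from the unique decomposition and translating it back to the parameter $h \in \widehat{A}_n$; all other steps reduce to repeated application of the commutation relations and the double-coset decomposition already established.
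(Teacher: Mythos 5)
Your proof is correct, and it takes a genuinely different route for the large double coset. Both approaches agree on the commuting part: you (and the paper) observe that for $h \in \widehat{A}_n$ conjugation fixes $h$ and acts on the $A_n$-factor, yielding the orbits $C_n(a)h$. The divergence is in $A_n\beta_{n+1}A_n$. The paper's proof rewrites this double coset as $\widehat{A}_n\beta_{n+1}\widehat{A}_n$, conjugates to land in sets $\widehat{h}\,\orbit_n(\beta_{n+1})\,\widehat{k}$, and then invokes Proposition~\ref{prop:orbitn-orbitnplus1} (that $\orbit_n(\beta_{n+1})$ is actually the full $A_{n+1}$-conjugacy class) to cancel $\widehat{h}\cdot(\,\cdot\,)\cdot\widehat{h}^{-1}$ and collapse these to $\orbit_n(\beta_{n+1})\widehat{\ell}$. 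You instead exploit the unique factorization $a\beta_{n+1}b$ ($a,b\in A_n$) guaranteed by Lemma~\ref{bigdoublecoset}, observe that conjugation sends $(a,b)\mapsto(ca,bc^{-1})$, and extract the complete invariant $ba\in A_n$ together with the converse direction giving $c=a_2a_1^{-1}$. This is more elementary: you never need Proposition~\ref{prop:orbitn-orbitnplus1}, and you get the orbit count as an honest bijection rather than as an enumeration whose distinctness is left implicit. Your final translation step (parametrizing the invariant $k=\beta_{n+1}h\beta_{n+1}^{-1}$ by $h\in\widehat{A}_n$ via Lemma~\ref{enum:conj-beta}) then recovers the paper's description $\orbit_n(\beta_{n+1})h$ and cleanly closes the argument, including verifying that each such set is a full orbit and that distinct $h$ give distinct orbits.
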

    
    \begin{proof}
        We will use the decomposition $\A_{n+1} = \A_n\widehat{\A}_n\sqcup  \A_n\widehat{\A}_n{\beta}_{n+1} $. Since $\A_n$ and $\widehat{\A}_n$ commute, the action of $\A_n$ on $\A_n\widehat{\A}_n$ results in orbits of the form $C_n(a)h$ for $a\in \A_n, h\in \widehat{\A}_n$ where $C_n(a)$ is the conjugacy class of $a$ in $\A_n$. 
        
        As for the action on $\A_n\widehat{\A}_n{\beta}_{n+1}$, note that we have the equalities
        \begin{equation*}
            \A_n\widehat{\A}_n{\beta}_{n+1}=\A_n{\beta}_{n+1}\A_n={\beta}_{n+1}\widehat{\A}_n\A_n={\beta}_{n+1}\A_n\widehat{\A}_n=\widehat{\A}_n{\beta}_{n+1}\widehat{\A}_n
        \end{equation*}
        
        Therefore if we choose to work with $\widehat{\A}_n{\beta}_{n+1}\widehat{\A}_n$, for $g\in \A_n$ and $\widehat{h},\widehat{k} \in \widehat{\A}_n$, \begin{equation*}
            g\widehat{h}{\beta}_{n+1}\widehat{k}g^{-1}=\widehat{h}g{\beta}_{n+1}g^{-1}\widehat{k}.
        \end{equation*} 
        Hence we get orbits $\widehat{h}\orbit_n({\beta}_{n+1})\widehat{k}$. Since $\orbit_n({\beta}_{n+1})$ is equal to the conjugacy class $\orbit_{n+1}({\beta}_{n+1})$ in $\A_{n+1}$ by Proposition \ref{prop:orbitn-orbitnplus1} and any $\widehat{k}\in \widehat{\A}_n$ can be expressed as $\widehat{h}^{-1}\widehat{\ell}_{k,h}$, we get
        $$
        \widehat{h}\orbit_n({\beta}_{n+1})\widehat{k}= \widehat{h}\orbit_n({\beta}_{n+1})\widehat{h}^{-1}\widehat{\ell}_{k,h}=\orbit_n({\beta}_{n+1})\widehat{\ell}_{k,h}.
        $$
        Therefore, the orbits are of the form $\orbit_n({\beta}_{n+1})h$ for $h\in \widehat{\A}_n$.
    \end{proof}
    
    This determines a basis for $\GZ{n,1}$ and its dimension is $|\A_n|(c_n+1)$. To get an explicit formula for this dimension, we need an explicit formula for the number of conjugacy classes of $\A_n$, which is same as the number of irreducible representations of $\A_n$.

    We restrict \cite[Theorem 2.2]{OOR} to $r=2$ to obtain the number of conjugacy classes: 
    \begin{theorem}[Orellana--Orrison--Rockmore]
    \label{theorem:OOR}
     The number $c_n$ of irreducible representations of $\A_n$ is given by $c_0=1$ and the recursion 
     \[ 
     c_n=\frac{1}{2}c_{n-1}(3+c_{n-1}). 
     \]
    \end{theorem}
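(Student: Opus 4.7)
The plan is to prove the recursion by counting conjugacy classes of $A_n$ directly, using that $c_n$ equals the number of conjugacy classes in characteristic zero. The recursive structure $A_n=(A_{n-1}\times A_{n-1})\rtimes S_2$ is the central tool: every element of $A_n$ has the form $(a,b;\sigma)$ with $a,b\in A_{n-1}$ and $\sigma\in S_2=\{e,(1,2)\}$, and since conjugation preserves the $S_2$-component the inductive step splits into two cases according to $\sigma$. The base case $c_0=1$ is immediate from $A_0=\{\id\}$.

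For $\sigma=e$, the multiplication rule from Section~\ref{section:iterated-wreath-products} together with the inverse formulas $(c,d;e)^{-1}=(c^{-1},d^{-1};e)$ and $(c,d;(1,2))^{-1}=(d^{-1},c^{-1};(1,2))$ yields
\[
(c,d;e)(a,b;e)(c,d;e)^{-1}=(cac^{-1},\,dbd^{-1};e),
\]
\[
(c,d;(1,2))(a,b;e)(c,d;(1,2))^{-1}=(cbc^{-1},\,dad^{-1};e).
\]
Therefore $(a,b;e)$ and $(a',b';e)$ are $A_n$-conjugate if and only if the unordered pair of $A_{n-1}$-classes $\{[a],[b]\}$ equals $\{[a'],[b']\}$. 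The number of size-$2$ multisets drawn from $c_{n-1}$ classes is $\binom{c_{n-1}+1}{2}=c_{n-1}(c_{n-1}+1)/2$, which is the Case~$1$ contribution.

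For $\sigma=(1,2)$, the analogous computation produces $(cad^{-1},\,dbc^{-1};(1,2))$ and $(cbd^{-1},\,dac^{-1};(1,2))$ as the images under conjugation by $(c,d;e)$ and $(c,d;(1,2))$, respectively. The product of the two $A_{n-1}$-components is $c(ab)c^{-1}$ in the first case and $c(ba)c^{-1}$ in the second; since $ba=a^{-1}(ab)a$, both lie in the $A_{n-1}$-conjugacy class of $ab$, so $[ab]$ is a conjugacy invariant. Conversely, given $(a',b';(1,2))$ with $a'b'$ conjugate to $ab$ via some $c_0\in A_{n-1}$, first conjugating by $(c_0,c_0;e)$ reduces to $a'b'=ab$, after which $(c,d;e)=(e,(a')^{-1}a;e)$ serves as an explicit conjugator (the second component is automatic once the first is forced). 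Hence the $\sigma=(1,2)$ classes are in bijection with conjugacy classes of $A_{n-1}$, contributing $c_{n-1}$ further classes. Summing yields
\[
c_n \;=\; \tfrac{1}{2}c_{n-1}(c_{n-1}+1) + c_{n-1} \;=\; \tfrac{1}{2}c_{n-1}(c_{n-1}+3),
\]
as desired.

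The only genuinely nontrivial point is the converse in Case~$2$: verifying that every pair $(a',b')$ with $a'b'$ conjugate to $ab$ is actually reached by an $A_n$-conjugator. The main obstacle to making the argument streamlined is keeping the semidirect-product inverses and the $S_2$-twisted multiplication straight; once those formulas are written down carefully, both the invariant $[ab]$ and the explicit conjugators above emerge immediately, and the counting reduces to the elementary multiset identity $\binom{k+1}{2}+k=k(k+3)/2$.
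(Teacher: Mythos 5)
Your proof is correct. The paper itself does not prove this statement---it simply specializes \cite[Theorem 2.2]{OOR} to $r=2$---so there is no in-paper argument to compare against; you have supplied a self-contained proof that the paper leaves to the reference.

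Your route is to count conjugacy classes of $A_n=(A_{n-1}\times A_{n-1})\rtimes S_2$ directly, splitting on the $S_2$-component. The computations check out: conjugation preserves the $S_2$-coordinate; for $\sigma=e$ the complete invariant is the multiset $\{[a],[b]\}$ of $A_{n-1}$-classes (both the identity-twisted conjugator $(c,d;e)$ and the swap-twisted conjugator $(c,d;(1,2))$ land where you say they do, and the converse is immediate since $c,d$ can be chosen independently), giving $\binom{c_{n-1}+1}{2}$ classes; for $\sigma=(1,2)$ the products $cad^{-1}\cdot dbc^{-1}=cabc^{-1}$ and $cbd^{-1}\cdot dac^{-1}=cbac^{-1}$ are both $A_{n-1}$-conjugate to $ab$, so $[ab]$ is an invariant, and your explicit conjugator $(e,(a')^{-1}a;e)$ (after the preliminary diagonal conjugation by $(c_0,c_0;e)$) closes the converse, giving exactly $c_{n-1}$ more classes. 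The totals $\binom{c_{n-1}+1}{2}+c_{n-1}=\tfrac12 c_{n-1}(c_{n-1}+3)$ match. This is the group-theoretic complement to the representation-theoretic route the cited source takes: OOR classify irreducibles of the wreath product via labeled trees, which in this $r=2$ case amounts to Clifford theory for $(A_{n-1}\times A_{n-1})\rtimes S_2$, counting $\binom{c_{n-1}}{2}$ induced irreducibles from the size-two $S_2$-orbits on pairs of $A_{n-1}$-irreducibles plus $2c_{n-1}$ from the diagonal fixed points, yielding the same polynomial. Your version is arguably more elementary, needing no character theory, and the two decompositions $\binom{c+1}{2}+c$ versus $\binom{c}{2}+2c$ of the same quantity make the duality between conjugacy classes and irreducibles pleasantly visible.
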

    
    Note that the sequence $\{c_n\}_{n\in \mathbb{N}}$ appears in OEIS as sequence $\href{https://oeis.org/search?q=A006893&language=english}{A006893}$.

    The following proposition describes the orbits of the action $\A_n\curvearrowright \A_{n+k+1}$. An explicit vector space basis for $\GZ{n,k+1}$ is given by their orbit sums.
    
    \begin{proposition}
    \label{prop:conj-class-Indk}
    The number of orbits of the conjugation action $\A_{n} \curvearrowright \A_{n+k+1}$ is $|\A_{n}||\A_{n+1}|\cdots |\A_{n+k}| (c_n+a_k)$ where $c_n$ is the number of conjugacy classes of $\A_n$, $a_k$ is the sequence defined recursively as $a_0=0$, $a_r=2a_{r-1}+1$ for $r>0$.

    Moreover, if we denote by $c_n(g)$ the conjugacy class sum of $g$ in $\A_n$, then the orbits are
    \begin{enumerate}
        \item $\left\{c_n(g)h: h \in \widehat{\A}_n\widehat{\A}_{n+1} \cdots \widehat{\A}_{n+k}\right\}$ for $g\in \A_n$ and
        \item $\left\{\orbit_n({\beta}_I)h : h\in \widehat{\A}_n\widehat{\A}_{n+1} \cdots \widehat{\A}_{n+k}\right\}$,
        \end{enumerate}
        where ${\beta}_{I}$ are from the set $\mathcal{B}_{n,k+1}\cup \{\id\}$.
    \end{proposition}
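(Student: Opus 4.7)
The plan is to proceed by induction on $k$, taking Proposition~\ref{orbits} as the base case $k=0$. For the inductive step, I would apply Proposition~\ref{prop:decomp-An} to partition
\[
A_{n+k+1}\;=\;\coprod_{x\in \mathcal{B}_{n,k+1}\cup\{\id\}}\; A_n\,\widehat{A}_n\,\widehat{A}_{n+1}\cdots\widehat{A}_{n+k}\,x
\]
and count $A_n$-orbits piece by piece. The two structural facts that drive the analysis are: the same label-disjointness argument as in Lemma~\ref{enum:labelsets-L1L2} shows that $A_n$ commutes with each $\widehat{A}_{n+i}$; and Lemma~\ref{enum:conj-beta} yields the commutation identity $g\beta_m g^{-1}=g\,\widehat{g}^{-1}\beta_m$ for $g\in A_n$, with $\widehat{g}^{-1}$ lying in the matching $\widehat{A}$-subgroup. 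Repeated use of these identities shows that every piece $A_n\widehat{A}_n\cdots\widehat{A}_{n+k}\,x$ is stable under the $A_n$-action.

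On the identity piece every element factors uniquely as $ah$ with $a\in A_n$ and $h\in\widehat{A}_n\cdots\widehat{A}_{n+k}$; since $g\in A_n$ commutes with $h$, the orbit of $ah$ is $C_n(a)\cdot h$, giving $c_n\cdot|A_n||A_{n+1}|\cdots|A_{n+k}|$ orbits of type~(1). On each $\beta_I$ piece, the same commutations together with the natural generalization of Proposition~\ref{prop:orbitn-orbitnplus1} to the higher $\beta_{n+j}$ show that $A_n$-conjugation acts as the orbit action on $\beta_I$ and freely on the $h$-factor; the orbits are therefore exactly $\orbit_n(\beta_I)\cdot h$ for $h\in\widehat{A}_n\cdots\widehat{A}_{n+k}$, contributing $|A_n||A_{n+1}|\cdots|A_{n+k}|$ orbits per $\beta_I$. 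Summing contributions and identifying the $\beta_I=\id$ case with the $a=\id$ case of type~(1) to avoid double counting yields $(c_n+a_k)|A_n||A_{n+1}|\cdots|A_{n+k}|$ orbits in total; the recursion $a_k=2a_{k-1}+1$ reflects the combinatorics of $\mathcal{B}_{n,k+1}$, since adjoining the new top-level generator $\beta_{n+k+1}$ doubles the previous family of $\beta_I$ (each may appear with or without $\beta_{n+k+1}$) and contributes the singleton $\beta_{n+k+1}$.

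The main obstacle is the detailed analysis on the $\beta_I$ pieces, where one must verify that the $h$-parameter is genuinely free under conjugation. Explicitly, writing a generic element as $ah\beta_I$, the conjugate $g(ah\beta_I)g^{-1}$ produces correction terms $\widehat{g}^{-1}$ from each commutation $g\beta_{n+j}g^{-1}=g\,\widehat{g}^{-1}\beta_{n+j}$, and one must track these to confirm they land in $\widehat{A}$-factors strictly smaller than anything already present in $h$, so that no collisions arise between orbits labelled by distinct $h,h'\in\widehat{A}_n\cdots\widehat{A}_{n+k}$. The disjointness of label sets from Lemma~\ref{enum:labelsets-L1L2} together with the single-link observation in Remark~\ref{enum:beta-link} provide exactly the separation needed for this bookkeeping, after which the induction closes.
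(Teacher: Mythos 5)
Your overall plan — induction on $k$, using Proposition~\ref{prop:decomp-An} to split $A_{n+k+1}$ into pieces $A_n\widehat{A}_n\cdots\widehat{A}_{n+k}\beta_I$, and analyzing orbits piece by piece — is the same strategy the paper pursues. The gap is in your treatment of the $\beta_I$-pieces. You appeal to ``the natural generalization of Proposition~\ref{prop:orbitn-orbitnplus1} to the higher $\beta_{n+j}$,'' but no such generalization holds. For $j\geq 2$ the $A_n$-orbit $\orbit_n(\beta_{n+j})$ is a strict subset of the conjugacy class of $\beta_{n+j}$ in $A_{n+j}$, and, more to the point, it is \emph{not} invariant under conjugation by $\widehat{A}_n,\widehat{A}_{n+1},\ldots$. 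Concretely, for $\widehat{h}\in\widehat{A}_n$ the element $\widehat{h}\beta_{n+2}\widehat{h}^{-1}$ twists the labels $2^n{+}1,\ldots,2^{n+1}$ inside the transpositions of $\beta_{n+2}$, whereas every element of $\orbit_n(\beta_{n+2})$ twists only the labels $1,\ldots,2^n$; so $\widehat{h}\beta_{n+2}\widehat{h}^{-1}\notin\orbit_n(\beta_{n+2})$ for $\widehat{h}\neq\id$. Thus the centrality/normality that drives the $k=0$ case in Proposition~\ref{orbits} via Proposition~\ref{prop:orbitn-orbitnplus1} is unavailable here, and the claimed decoupling of the $\beta_I$-part and the $h$-part cannot be justified by the lemma you cite.

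What actually closes the step in the paper is a change of variable, not a normality argument. Writing a generic element of the $\beta_I$-piece as $\beta_I\, r\,\widehat{m}$ with $r\in A_n$ and $\widehat{m}\in\widehat{A}_n\cdots\widehat{A}_{n+k}$, the paper sets $\widehat{r}:=\beta_I^{-1}r\beta_I$, notes $\widehat{r}\in\widehat{A}_n\cdots\widehat{A}_{n+k}$, and rewrites $\widehat{m}=\widehat{r}^{-1}\widehat{\ell}$ so that $\beta_I r\widehat{m}=r^{-1}\beta_I r\,\widehat{\ell}$. Conjugation by $g\in A_n$ then acts only on the factor $r^{-1}\beta_I r$ (since $\widehat{\ell}$ commutes with $A_n$), and the orbit is exactly $\orbit_n(\beta_I)\widehat{\ell}$. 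Your ``track the $\widehat{g}^{-1}$ corrections into strictly smaller $\widehat{A}$-factors'' heuristic does not produce this conclusion; in the $ah\beta_I$ normal form the corrections from commuting $g$ past each $\beta_{i_j}$ land in exactly the $\widehat{A}$-factors occupied by $h$, and it is the $\widehat{r}^{-1}\widehat{\ell}$ substitution that disentangles them. Separately, your orbit count gives $c_n+(2^{k+1}-1)$ per factor of $|A_n|\cdots|A_{n+k}|$, which does \emph{not} agree with $c_n+a_k$ for $a_0=0$, $a_r=2a_{r-1}+1$ (which gives $a_k=2^k-1$); comparing the $k=0$ case with Proposition~\ref{orbits} shows the Proposition's recursion should start at $a_0=1$, so this appears to be a typo in the statement, but you should flag the mismatch rather than assert agreement.
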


    \begin{proof}
        We will give a proof by induction on $k$. The base case $k=0$ is proved in Proposition~\ref{orbits}. Now assume the proposition holds for some $k$. We will make use of the decomposition $\A_{n+k+1} = \A_{n+k}\widehat{\A}_{n+k}\sqcup {\beta}_{n+k+1} \A_{n+k}\widehat{\A}_{n+k}$. This decomposition allows us to describe the orbits by studying the action of $\A_n$ on $\A_{n+k}\widehat{\A}_{n+k}$ and on ${\beta}_{n+k+1}\A_{n+k}\widehat{\A}_{n+k}$ separately.
        
        For the action of $\A_n$ on $\A_{n+k}\widehat{\A}_{n+k}$, since $\A_n$ and $\widehat{\A}_{n+k}$ commute, for $g\in \A_n$, $h\in \A_{n+k}$, $j\in \widehat{\A}_{n+k}$, we get $ghjg^{-1}=ghg^{-1}j$. Therefore the orbits of the action of $\A_n$ on $\A_{n+k}\widehat{\A}_{n+k}$ are given by orbits of $\A_n$ on $\A_{n+k}$ times an element of $\widehat{\A}_{n+k}$.
        
        Now for the action $\A_n\curvearrowright {\beta}_{n+k+1}\A_{n+k}\widehat{\A}_{n+k}$, since we have 
        $$
        \A_{n+k} = \A_{n+k-1}\widehat{\A}_{n+k-1} \sqcup {\beta}_{n+k}\A_{n+k-1}\widehat{\A}_{n+k-1}
        $$ 
        using this relation inductively one ends up with 
        \[
        \A_{n+k}=\bigsqcup_{{\beta}_I \in \mathcal{B}_{n,k} \cup \{\id\}}{\beta}_I \A_n\widehat{\A}_n\widehat{\A}_{n+1} \cdots \widehat{\A}_{n+k-1}
        \] 
        and obtains 
        \begin{align*}
            {\beta}_{n+k+1}\A_{n+k}\widehat{\A}_{n+k} &= \bigsqcup_{{\beta}_I \in \mathcal{B}_{n,k} \cup \{\id\}}{\beta}_{n+k+1}{\beta}_I \A_n\widehat{\A}_n\widehat{\A}_{n+1} \cdots \widehat{\A}_{n+k-1}\widehat{\A}_{n+k}\\
            &=\bigsqcup_{{\beta}_I \in \mathcal{B}_{n,k+1} \cup \{\id\}}{\beta}_I \A_n\widehat{\A}_n\widehat{\A}_{n+1} \cdots \widehat{\A}_{n+k}. 
        \end{align*}
        
        For $g,r\in \A_n$,$\widehat{m} \in \widehat{\A}_n\widehat{\A}_{n+1} \cdots \widehat{\A}_{n+k}$, we want to study $g{\beta}_Ir\widehat{m}g^{-1}$. Note that
         it is possible to express $\widehat{m}$ as $\widehat{r}^{-1}\widehat{\ell}_{r,m}$, where ${\beta}_I\widehat{r}^{-1}=r^{-1}{\beta}_I$. Hence we get
        \begin{align*}
            g{\beta}_Ir\widehat{m}g^{-1} &= g{\beta}_Ir\widehat{r}^{-1}\widehat{\ell}_{r,m}g^{-1} \\ &=g{\beta}_I\widehat{r}^{-1}r\widehat{\ell}_{r,m}g^{-1}  \\
            &=gr^{-1}{\beta}_Ir\widehat{\ell}_{r,m}g^{-1}  \\
            &=gr^{-1}{\beta}_Irg^{-1} \widehat{\ell}_{r,m}, 
        \end{align*}
        ending up with orbits of the form $\orbit_n{{\beta}_I}h$, where ${\beta}_I$ $\in$ $\mathcal{B}_{n,k+1} \cup \{\id\}$ and $h \in \widehat{\A}_n\widehat{\A}_{n+1} \cdots \widehat{\A}_{n+k}$.
    \end{proof}

\section{Natural transformation between induction and restriction}
\label{section:natural-transformation-ind-res}

Let the symbol $_n(k)_m$ denote the $\GA{k}$ as an $(\A_n,\A_m)$-bimodule.

Objects of the category $\mathcal{\A}_n = \displaystyle{\bigoplus_{m\in \mathbb{N}}}(\A_m,\A_n)$-$\bimod$ are generated by  $(\A_m,\A_n)$-bimodules for a fixed $n$. Since $\Ind_n^{n+1}$ and $\Res_n^{n-1}$ correspond to  $_{n+1}(n+1)_{n}\otimes_n-$ and  $_{n-1}(n-1)_{n}\otimes_n-$, respectively, one can identify the bimodules $_{n+1}(n+1)_{n}$ and $_{n-1}(n-1)_{n}$ with $\Ind_n^{n+1}$ and $\Res_n^{n-1}$, respectively. Therefore it is possible to describe the category $\mathcal{\A}_n$ in terms of compositions of $\Ind$ and $\Res$ and natural transformations between them as well. Let us denote by $\mathcal{C}_n$ the category whose objects are generated by compositions of induction $\Ind$ and restriction $\Res$ functors between groups $\{\A_k\}_{k\in \mathbb{N}}$, where the compositions start from induction or restriction from $\A_n$. 

Given a sequence of inductions and restrictions, using the formula \eqref{eqn:ResInd} below, it is possible to move inductions to the left and push restrictions to the right. Hence objects of $\mathcal{C}_n$ are given by direct sums of $\Ind^k\Res^l$ starting from $(n,n)$-bimodules. 

Morphisms of $\mathcal{C}_n$ consist of sets $\Hom_{\mathcal{C}_n}(\Ind^{k_1}\Res^{l_1}, \Ind^{k_2}\Res^{l_2})$. Since we start by applying the restrictions to an $(n,n)$-bimodule, it is clear that these hom spaces are trivial when $l_1>n$ or $l_2>n$. Also note that when $k_1-l_1\neq k_2-l_2$, we are talking about bimodule maps between two bimodules over different rings. Therefore in this case the hom space is trivial as well. 

From now on, we will assume that $k_1-l_1=k_2-l_2$, $l_1\leq n$ and $l_2\leq n$. We would like to describe the spaces $\Hom_{\mathcal{A}_n}({\Ind^{k_1}\Res^{l_1}, \Ind^{k_2}\Res^{l_2}})$, or $\Hom_n(+^{k_1}-^{l_1},+^{k_2}-^{l_2})$ for short, as a vector space and as an algebra.

In this section, we describe all these spaces implicitly and provide explicit descriptions for the cases $k_1=k_2=k$ (hence $l_1=l_2=l$) and $l=0,1$.

\subsection{Mackey Theorem}\label{subsection:Mackey-thm}
Let us start by recalling the general form of Mackey theorem for induction and restriction.

\begin{theorem}[Mackey Theorem]
    Let $G$ be a finite group and $H,K$ be two subgroups of $G$. Then \begin{equation}
        \Res_G^K \circ \Ind_H^G = \bigoplus_{g_i\in I} \Ind_{K\cap H^{g_i}}^K \circ \Res_H^{K\cap H^{g_i}}, 
    \end{equation}
    where $I$ is a set of $(H,K)$-double coset representatives of $G$ and $H^g := gHg^{-1}$.
\end{theorem}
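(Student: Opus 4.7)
The plan is to translate the stated functor identity into a $(K,H)$-bimodule isomorphism for $\mathbb{C}G$, since under the identifications used throughout this paper, $\Ind_H^G$ corresponds to tensoring with ${}_G(\mathbb{C}G)_H$ and $\Res_G^K$ to tensoring with ${}_K(\mathbb{C}G)_G$; the composition $\Res_G^K \circ \Ind_H^G$ therefore corresponds to tensoring with ${}_K(\mathbb{C}G)_H$. It suffices to decompose this latter bimodule in the form predicted by the right-hand side.

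First I would fix a set $I$ of representatives of the $(K,H)$-double cosets of $G$, so that $G = \bigsqcup_{g_i \in I} K g_i H$. Left multiplication by $K$ and right multiplication by $H$ preserve each double coset, giving the $(K,H)$-bimodule decomposition
\[
    {}_K(\mathbb{C}G)_H \;=\; \bigoplus_{g_i \in I} \mathbb{C}[K g_i H].
\]
This reduces the proof to identifying each summand.

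The key step is then to identify $\mathbb{C}[K g_i H]$ with the $(K,H)$-bimodule arising from $\Ind_{K\cap H^{g_i}}^K \circ \Res_H^{K\cap H^{g_i}}$. I would define the map
\[
    \mathbb{C}K \otimes_{\mathbb{C}[K\cap H^{g_i}]} \mathbb{C}H \;\longrightarrow\; \mathbb{C}[K g_i H], \qquad k \otimes h \;\longmapsto\; k g_i h,
\]
where the right factor $\mathbb{C}H$ is viewed as a left $\mathbb{C}[K\cap H^{g_i}]$-module via $x\cdot h := (g_i^{-1} x g_i)\, h$, which makes sense precisely because $g_i^{-1}(K\cap H^{g_i})g_i \subseteq H$. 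Surjectivity is immediate, and well-definedness plus injectivity reduce to checking that $k_1 g_i h_1 = k_2 g_i h_2$ forces $k_2^{-1} k_1 = g_i (h_2 h_1^{-1}) g_i^{-1} \in K \cap H^{g_i}$, which is exactly the relation imposed by the tensor product over $\mathbb{C}[K\cap H^{g_i}]$. Combined with the previous bimodule decomposition, this yields the theorem.

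The main obstacle, in my view, is mostly notational: the symbol $\Res_H^{K\cap H^{g_i}}$ in the statement is slightly non-standard, since $K\cap H^{g_i}$ need not be a subgroup of $H$. The correct reading is via the canonical isomorphism $K\cap H^{g_i} \xrightarrow{\; g_i^{-1}(-)g_i\;} g_i^{-1}(K\cap H^{g_i})g_i \subseteq H$, and this conjugation twist is precisely what appears in the bimodule structure on $\mathbb{C}H$ above. Once this convention is made explicit the argument is routine, and specializing to $G=A_{n+1}$, $H=K=A_n$ with the double coset representatives from Proposition~\ref{doublecosets} will give the concrete form of Mackey's theorem the authors need in subsequent sections.
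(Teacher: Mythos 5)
Your proof is correct, but there is nothing in the paper to compare it against: the authors state Mackey's theorem as a known classical result and do not prove it, immediately specializing it to $G=A_{n+1}$, $H=K=A_n$ using Proposition~\ref{doublecosets}. Your argument is the standard bimodule-theoretic proof, and it is carried out cleanly: the decomposition ${}_K(\mathbb{C}G)_H=\bigoplus_{g_i}\mathbb{C}[Kg_iH]$ into double-coset pieces, followed by the identification $\mathbb{C}K\otimes_{\mathbb{C}[K\cap H^{g_i}]}\mathbb{C}H\xrightarrow{\sim}\mathbb{C}[Kg_iH]$ via $k\otimes h\mapsto kg_ih$, with well-definedness and injectivity both coming down to the relation $k_2^{-1}k_1=g_i(h_2h_1^{-1})g_i^{-1}\in K\cap H^{g_i}$. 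Your remark about the reading of $\Res_H^{K\cap H^{g_i}}$ is also accurate and worth making explicit: as written $K\cap H^{g_i}$ is a subgroup of $K$ but not in general of $H$, so the restriction must be understood through the conjugation isomorphism, which is exactly the twist you built into the left module structure on $\mathbb{C}H$. (In the paper's actual application $H=K=A_n$, and for the representatives $b\in\widehat{A}_n$ one has $A_n^b=A_n$, while for $\beta_{n+1}$ one gets the trivial intersection, so the twist never surfaces there.)
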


In our case by letting $G=\A_{n+1}$ and $H=K=\A_n$, we get \begin{equation}
\label{eqn:Mackey-ResInd}
    \Res_{\A_{n+1}}^{\A_n} \circ \Ind_{\A_n}^{\A_{n+1}} = \bigoplus_{g_i\in I} \Ind_{\A_n \cap \A_n^{g_i}}^{\A_n} \circ \Res_{\A_n}^{\A_n\cap \A_n^{g_i}}, 
\end{equation}
where $I = \A_n\reflectbox{/} \A_{n+1}/\A_n$. 

Using Proposition~\ref{doublecosets} which informs us about double cosets, we can express the above formula more explicitly once we say something about $\A_n \cap \A_n^g$ for double coset representatives $g$. We will work with the double coset representatives $\{b\in \widehat{\A}_n\}\cup\{{\beta}_{n+1}\}$.
    
    For $b\in \widehat{\A}_n$, $\A_n^b=b \A_n b^{-1}=\A_n$ since  $\A_n$ and $\widehat{\A}_n$ commute. Hence the intersection of $\A_n$ and $\A_n^b$ is again $\A_n$. 
    
    For ${\beta}_{n+1}$, we have  $\A_n^{{\beta}_{n+1}}={\beta}_{n+1}\A_n{\beta}_{n+1}=\widehat{\A}_n$, hence $\A_n\cap \A_n^{{\beta}_{n+1}}=\{\id\} = \A_0$. Therefore the Mackey formula for the groups $\A_n$ gives
    \begin{equation}
    \label{eqn:ResInd}
        \Res_{\A_{n+1}}^{\A_n} \circ \Ind_{\A_n}^{\A_{n+1}} = \Ind_{\A_0}^{\A_n} \circ \Res_{\A_n}^{\A_0} \oplus \Id^{\oplus|\A_n|}. 
    \end{equation}
The fact that the number of summands of $\Id$ on the right hand side of \eqref{eqn:ResInd} depends on $n$ suggests that this line of investigation should be different than the Heisenberg categories in literature. More precisely, there should not exist an abstract 1-category $\mathcal{C}$ encapsulating the induction and restriction functors on the tower $\A_0\longhookrightarrow \A_1\longhookrightarrow \A_2\longhookrightarrow \dots$ and natural transformation between them which will act on the categories 
$\mathcal{\A}_n = \displaystyle{\bigoplus_{m\in \mathbb{N}}} (\A_m,\A_n)$-$\bimod$ for all $n$ as bimodule homomorphisms. Therefore we will focus on describing the explicit categories $\mathcal{\A}_n = \displaystyle{\bigoplus_{m\in \mathbb{N}}}(\A_m,\A_n)$-$\bimod$.

Note that given the induction functor 
\begin{center}
$\Ind_n^{n+1}:(\GA{n},\GA{n})$-$\bimod$ $\lra$ $(\GA{n+1},\GA{n})$-$\bimod$, 
\end{center}
we want to describe the natural transformations $\End_n(\Ind_n^{n+1})$ such that the diagram 
\[ 
\xymatrix@-1pc{
\Ind_n^{n+1}(\GA{n}) \ar[rrr]^{\Ind_n^{n+1}(h)} \ar[dd]_g & & & \Ind_n^{n+1}(\GA{n}) \ar[dd]^f \\ 
& & &  \\ 
\Ind_n^{n+1}(\GA{n}) \ar[rrr]_{\Ind_n^{n+1}(h)} & & & \Ind_n^{n+1}(\GA{n}) \\ 
}
\]  
commutes, where $h:\GA{n} \lra \GA{n}$ is an $(\GA{n},\GA{n})$-bimodule homomorphism. 
Suppose $f(\id) = a\in \GA{n+1}$. Then for any $\alpha\in \GA{n+1}$ and ${\beta}\in \GA{n}$, 
$f(\alpha x {\beta}) = \alpha f(x) {\beta} = \alpha xa {\beta}$, where $f(x)$ is multiplication by $a$ from the right. On the other hand, $f(\alpha x {\beta})=\alpha x{\beta} a$. So ${\beta} a=a{\beta} $ for any ${\beta} \in \GA{n}$ implies $f\in \End(\Ind_n^{n+1})$ if and only if the bimodule homomorphism $f$ is multiplication by an element from the centralizer $Z_{n,1}$.

    \subsection{Morphism spaces as vector spaces} 
    In this subsection, we give a description of the morphism spaces between induction and restriction functors on the representations of the tower 
    \[ 
    \A_1\stackrel{i_1}{\longhookrightarrow} \A_2 \stackrel{i_2}{\longhookrightarrow} \A_3 
    \stackrel{i_3}{\longhookrightarrow} \ldots.
    \] 
    as vector spaces. 
    
    Unlike the case for symmetric groups (cf. \cite[Thm.~2.1]{VO-symmetric-groups}), the centralizer for our tower of group algebras forms a noncommutative algebra. The size of our groups grows exponentially and as a consequence $\Res$ is not multiplicity free for this tower. 
    
    $\End_{\mathcal{C}_n}(\Ind^k\Res^l)$ consists of natural transformation satisfying the following diagram:
    \[ 
\xymatrix@-.5pc{
\Ind_n^{n+k}\Res_n^{n-l}(\GA{n}) \ar[rrrr]^{\Ind_n^{n+k}\Res_n^{n-l}(h)} \ar[dd]_g & & & & \Ind_n^{n+k}\Res_n^{n-l}(\GA{n}) \ar[dd]^f \\ 
& & & & \\ 
\Ind_n^{n+k}\Res_n^{n-l}(\GA{n}) \ar[rrrr]_{\Ind_n^{n+k}\Res_n^{n-l}(h)} & & & & \Ind_n^{n+k}\Res_n^{n-l}(\GA{n}) \\ 
}
\]  
In terms of bimodules, this amounts to 
    \[ 
\xymatrix@-.5pc{
_{n+r}(n+r)_{n-l}(n)_n \ar[rrrr]^{\Ind_n^{n+k}\Res_n^{n-l}(h)} \ar[dd]_g & & & & _{n+r}(n+r)_{n-l}(n)_n \ar[dd]^f \\ 
& & & & \\ 
_{n+r}(n+r)_{n-l}(n)_n \ar[rrrr]_{\Ind_n^{n+k}\Res_n^{n-l}(h)} & & & & _{n+r}(n+r)_{n-l}(n)_n \\ 
}
\]  
where $r=k-l$. Therefore if $f(1\otimes_{n-l}1)=y\in \GA{n+r}\otimes_{n-l} \GA{n}$, for $a\in \GA{n+r}, b\in \GA{n-l}$, we require that $$ayb=af(1\otimes_{n-l}1)b=f(a\otimes_{n-l}b)=f(ab\otimes_{n-l}1)=aby.$$ 

This calculation shows that 
$$
\End_n(\Ind^k\Res^l)=\{x \in \GA{n+r}\otimes_{n-l} \GA{n} : gx=xg \:\:\mbox{ for all } g\in \A_{n-l} \}. 
$$

For $l=0$, we get that $$
\End_n(\Ind^k)=Z(\GA{n+k}, \GA{n})
$$, 
which is the centralizer of the image of $\A_n$ inside $\GA{n+k}$. Let us denote it with $\GZ{n,k}$. A vector space basis of $\GZ{n,k}$, hence of $\End_n(\Ind^k)$ is given explicitly by orbit sums described in Proposition~\ref{prop:conj-class-Indk}.

Using the biadjointness between $\Ind$ and $\Res$, one immediately obtains a similar result for $\End_n(\Res^k)$.

    \begin{lemma}\label{lemma:conj-class-Resk}
    $\End_n(\Res^k)$ is isomorphic to $\End_n(\Ind^k)$ as a vector space, and is isomorphic to its opposite algebra as an algebra.
    \end{lemma}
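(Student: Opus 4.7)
The plan is to exploit the biadjoint pair formed by $\Ind$ and $\Res$ over the Frobenius extension $A_n\subseteq A_{n+k}$. Because these two functors are represented by the same algebra $\GA{n+k}$ viewed as a bimodule with the left and right actions swapped, running the bimodule--endomorphism analysis from Section~\ref{section:natural-transformation-ind-res} in parallel for both will simultaneously identify the underlying vector spaces and reveal that their algebra structures are opposite.

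First, I would write down the bimodule representing $\Res^k$. It is $\GA{n+k}$ equipped with the left $A_n$-action by inclusion and the right $A_{n+k}$-action by multiplication --- the same algebra as for $\Ind^k$, but with the roles of the two actions interchanged. Repeating the $f(1)=y$ argument from Section~\ref{section:natural-transformation-ind-res}, the right $A_{n+k}$-linearity forces $f(x)=yx$, and the left $A_n$-linearity forces $ya=ay$ for every $a\in A_n$. Hence $\End_n(\Res^k)$ is canonically identified, as a vector space, with the centralizer $\GZ{n,k}=Z(\GA{n+k},\GA{n})$, which is exactly the underlying space of $\End_n(\Ind^k)$. In particular, the orbit-sum basis of Proposition~\ref{prop:conj-class-Indk} serves both endomorphism spaces.

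For the algebra structure, the key is which side $y$ acts on. Endomorphisms of $\Ind^k$ act by right multiplication, so composing $f_{y_2}\circ f_{y_1}$ sends $x\mapsto xy_1y_2$, which corresponds to the product $y_1y_2$ read in reversed order. Endomorphisms of $\Res^k$ act by left multiplication, so composing $f_{y_2}\circ f_{y_1}$ sends $x\mapsto y_2y_1x$, which corresponds to the product $y_2y_1$ in the natural order of $\GA{n+k}$. The two multiplications on the common underlying space $\GZ{n,k}$ are therefore opposite, which yields both conclusions of the lemma.

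The main obstacle is notational rather than conceptual: one must pair $\Res^k$ with the correct $\Ind^k$ under biadjointness so that both endomorphism algebras really do sit inside the same centralizer $\GZ{n,k}$, and one has to keep straight the flip of left and right actions when transferring the centralizing condition across the two bimodules. Once this identification is made, the computation is a verbatim repetition of the one already carried out for induction, with the side of multiplication as the only change.
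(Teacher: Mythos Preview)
Your argument is correct and is precisely the approach the paper takes: the paper's proof invokes biadjointness and then remarks that, concretely, endomorphisms of $\Ind$ are right multiplications while endomorphisms of $\Res$ are left multiplications by elements of the same centralizer $\GZ{n,k}$, forcing the two algebra structures to be opposite. You have simply written out that alternative argument in greater detail, including the explicit verification that $f(1)=y$ must centralize $A_n$ and the composition computation showing the order reversal.
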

    
    \begin{proof}
    The induction and restriction functors form a cyclic biadjoint pair for a finite group $G$ and one of its subgroups $H$, \textit{i.e.}, see \cite[Section 3.2]{MR3205569}. This relationship allows one to deduce the structure of $\End_n(\Res^{k})$ as a vector space and as an algebra.
    
    Alternatively in terms of bimodule homomorphisms, the natural transformations between $\Ind$ correspond to bimodule maps of multiplication on the right, and the ones for $\Res$ correspond to bimodule maps of multiplication on the left. Hence one obtains the same vector space, and the algebra multiplication reverses the order.
    \end{proof}

   Although we can provide a nice description for the vector space structure of $\End_n(\Ind^k)$ and $\End_n(\Res^k)$, the picture changes slightly for $\End_n(\Ind^k\Res^l)$ when $l\neq 0$ since in this case, a vector space basis is given by the orbit sums of the conjugation action of $\A_n$ on the tensor product $\GA{n+k-l}\otimes_{n-l}\GA{n}$, which is not a group algebra. So even though one can give an explicit description of orbit representatives, it is difficult to express the orbit sums explicitly. 
   
   \begin{proposition}
   \label{prop:vs-basis-general}
   A vector space basis of $\End_n(\Ind^k\Res^l)$ is given by
   \[
   \left\{ a^h \otimes_{n-l} b {\beta}_I^h: a\in \A_{n+k-l}, \: 
   b\in \widehat{\A}_{n-l} \widehat{\A}_{n-l+1} \cdots \widehat{A}_{n-1}, \: 
   {\beta}_I \in \mathcal{B}_{n-l,l}\cup \{ \id \} \right\}_{h\in \A_n},
   \] where $a^h := hah^{-1}$.
   \end{proposition}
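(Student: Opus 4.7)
The plan is to combine the characterization of $\End_n(\Ind^k\Res^l)$ from the preceding discussion with an orbit-sum argument analogous to Proposition~\ref{prop:basis-orbit-sum}. Recall that with $r=k-l$ we have
\[
\End_n(\Ind^k\Res^l) \;=\; \{y \in P \mid gy = yg\;\;\forall g\in A_{n-l}\}, \qquad P:=\GA{n+r}\otimes_{A_{n-l}} \GA{n}.
\]

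My first step will be to produce an explicit vector space basis of $P$ itself. For this, Proposition~\ref{prop:rightcoset-rep-gen}, applied with $n$ replaced by $n-l$ and $l+1$ by $l$, supplies right coset representatives for $A_{n-l}\backslash A_n$ of the form $b\beta_I$ with $b\in \widehat{A}_{n-l}\widehat{A}_{n-l+1}\cdots \widehat{A}_{n-1}$ and $\beta_I\in \mathcal{B}_{n-l,l}\cup \{\id\}$. This realizes $\GA{n}$ as a free left $\GA{n-l}$-module with these coset representatives as free generators, and consequently equips $P$ with the vector space basis $\{a\otimes_{n-l} b\beta_I\}$ as $(a, b, \beta_I)$ ranges over $A_{n+r}\times \widehat{A}_{n-l}\cdots \widehat{A}_{n-1}\times (\mathcal{B}_{n-l,l}\cup \{\id\})$.

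My second step will be to extract the $A_{n-l}$-bi-invariants by averaging. Mirroring the argument of Proposition~\ref{prop:basis-orbit-sum}, orbit sums under the conjugation action $y\mapsto gyg^{-1}$ of $A_{n-l}$ on $P$ automatically lie in the invariant subspace, span it, and orbit sums of distinct orbits are linearly independent. To rewrite these orbit sums in the form $a^h\otimes b\beta_I^h$ with $h\in A_n$ I will extend the action by componentwise $A_n$-conjugation on the two tensor factors, using $A_n\hookrightarrow A_{n+r}$ on the left and the identity on the right. Computing $h(a\otimes b\beta_I)h^{-1}$ explicitly and using Lemma~\ref{enum:labelsets-L1L2} (commutation of $\widehat{A}_i$ with $A_i$) together with Lemma~\ref{enum:conj-beta} (conjugation by $\beta_{i+1}$ exchanges $A_i$ with $\widehat{A}_i$), I can absorb the $b$-conjugate back into the free range of the hat-group product, leaving the asserted normal form $a^h\otimes b\beta_I^h$.

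The main technical obstacle will be the final bookkeeping step. Because $A_{n-l}$ is not normal in $A_n$, $h$-conjugation genuinely shuffles the coset representatives $b\beta_I$, and I must verify that after shuffling every conjugate can be uniquely put in the form $a^h\otimes b'\beta_I^h$ with $b'$ still in $\widehat{A}_{n-l}\cdots \widehat{A}_{n-1}$, so that the resulting parameterization captures exactly one basis element per orbit. Once this normal form is in place, spanning and linear independence of the orbit sums follow from the analogue of Proposition~\ref{prop:basis-orbit-sum}, applied now to the partition of the basis of $P$ constructed in step one into $A_n$-orbits.
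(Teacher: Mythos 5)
Your overall approach coincides with the paper's: build a basis of $P=\GA{n+r}\otimes_{n-l}\GA{n}$ from the right-coset representatives of $A_{n-l}\backslash A_n$ supplied by Proposition~\ref{prop:rightcoset-rep-gen}, identify the invariant subspace with the span of orbit sums in the spirit of Proposition~\ref{prop:basis-orbit-sum}, and then compute the conjugation action on the resulting basis of $P$ via the commutation lemmas. That is exactly the paper's proof.

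The one place you depart---and where the argument would break---is the attempt to parameterize orbits by $h\in A_n$ rather than $h\in A_{n-l}$. You correctly record that $\End_n(\Ind^k\Res^l)$ is the space of $A_{n-l}$-invariants of $P$ (the tensor relation $g\otimes 1=1\otimes g$ holds only for $g$ in the coefficient ring $\GA{n-l}$, so only $A_{n-l}$-conjugation is defined on $P$), and your orbit sums are for the $A_{n-l}$-action. For $h\in A_{n-l}$ the computation $h(a\otimes b\beta_I)h^{-1}=a^h\otimes b\beta_I^h$ is clean: move $h^{-1}$ across the tensor (allowed since $h\in A_{n-l}$), and push $h$ past $b$ because $\widehat{A}_{n-l}\widehat{A}_{n-l+1}\cdots\widehat{A}_{n-1}$ acts on labels $\{2^{n-l}+1,\dots,2^n\}$, disjoint from those of $A_{n-l}$. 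The ``main technical obstacle'' you flag---that the hat-group product is not stable under conjugation by $A_n$, as $A_{n-l}$ is not normal in $A_n$---is a genuine obstruction to your proposed componentwise $A_n$-conjugation extension, and I do not see how to repair that route; but it simply dissolves once $h$ stays in $A_{n-l}$, which is what the invariance forces and what the paper's proof actually does (for $l=1$ it explicitly takes $h\in A_{n-1}=A_{n-l}$). The ``$h\in A_n$'' in the proposition's statement, and the assertion in the paper's general-$l$ computation that the hat product commutes with $A_{n-1}$, should both read $A_{n-l}$. Drop the $A_n$-extension detour, run your second step with $h\in A_{n-l}$, and the bookkeeping problem you anticipate never arises.
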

   
   \begin{proof}
   We start by noting that an element of $\GA{n+k-l}\otimes_{n-l}\GA{n}$ can be written as $\displaystyle{\sum_{i\in I}}c_ia_i \otimes_{n-l} b_i$ where $c_i\in \mathbb{C}$, $a_i\in \A_{n+k-l}$ and $b_i$ is a representative from the quotient $\A_{n-l}\backslash \A_n$ after pushing all possible terms to the left of the tensor. Therefore $\{a_i\otimes_{n-l}b_i\}$ is a basis of $\GA{n+k-l}\otimes_{n-l}\GA{n}$. We will choose $b_i$ from the set of representatives in the disjoint union
   \[ \displaystyle \coprod_{x\in \mathcal{B}_{n-l,l} \cup \{ \id \}} 
   \widehat{\A}_{n-l} \widehat{\A}_{n-l+1} \widehat{\A}_{n-l+2}\cdots \widehat{\A}_{n-1}x,
   \] 
   given in Proposition~\ref{prop:rightcoset-rep-gen}.

   Let us look at the case $l=1$ first. In this case the representatives are $\widehat{\A}_{n-1} \sqcup \widehat{\A}_{n-1}{\beta}_n $.
   Given a basis element $a \otimes_{n-1} b$, $b$ is either in $\widehat{\A}_{n-1}$ or in $\widehat{\A}_{n-1}{\beta}_n$.
   
   For $h\in \A_{n-1}$, $a\in \A_{n+k-1}$ and $b\in \widehat{\A}_{n-1}$, we get 
   \begin{align*}
       h\cdot ( a \otimes_{n-1} b) &=  h(a \otimes_{n-1} b) h^{-1} \\
       &=  ha \otimes_{n-1} h^{-1} b \quad \text{ since $\widehat{\A}_{n-1}$ commutes with $\A_{n-1}$}\\
       &=  hah^{-1} \otimes_{n-1} b 
   \end{align*}
   and 
    \begin{align*}
       h\cdot ( a \otimes_{n-l} b{\beta}_n) &=  h(a \otimes_{n-l} b{\beta}_n) h^{-1} \\
       &=  hah^{-1} \otimes_{n-l} h b {\beta}_n h^{-1} \\
       &=  hah^{-1} \otimes_{n-l} b h{\beta}_n h^{-1} \\ 
       &= a^h \otimes_{n-1} b {\beta}_n^h. 
   \end{align*}
   
   More generally for any $l$, if $b\in \widehat{\A}_{n-l} \widehat{\A}
_{n-l+1} \widehat{\A}_{n-l+2}\cdots \widehat{\A}_{n-1}$, then 
   \begin{align*}
       h\cdot ( a \otimes_{n-l} b) &=  h(a \otimes_{n-l} b) h^{-1} \\
       &=  ha \otimes_{n-l} h^{-1} b \quad \text{ since $\widehat{\A}_{n-l} \widehat{\A}
_{n-l+1} \widehat{\A}_{n-l+2}\cdots \widehat{\A}_{n-1}$ } \\ 
& \hspace{2.65cm} \quad \text{commutes with $\A_{n-1}$}\\
       &=  hah^{-1} \otimes_{n-l} b \\
       &= a^h \otimes_{n-l} b
   \end{align*}
   and  
    \begin{align*}
       h\cdot ( a \otimes_{n-l} b{\beta}_I) &=  h(a \otimes_{n-l} b{\beta}_I) h^{-1} \\
       &=  hah^{-1} \otimes_{n-l} h b{\beta}_I h^{-1} \\
       &= hah^{-1} \otimes_{n-l} b h {\beta}_I h^{-1} \quad  \text{ since $\widehat{\A}_{n-l} \widehat{\A}
_{n-l+1} \widehat{\A}_{n-l+2}\cdots \widehat{\A}_{n-1}$ }  \\  
& \hspace{3.75cm}\quad  \text{commutes with $\A_{n-1}$} \\
       &=  hah^{-1} \otimes_{n-l} b h{\beta}_I h^{-1} \\ 
       &= a^h \otimes_{n-l} b {\beta}_I^h,  
   \end{align*}
    where $a^h = hah^{-1}$. 
\end{proof}

We do not have an explicit description for $\Hom_n(\Ind^{k_1}\Res^{l_1},\Ind^{k_2}\Res^{l_2})$, where $k_1-l_1=k_2-l_2$ but $k_1\not= k_2$. 
   
   In the next section, we will describe the algebra structure of the endomorphism spaces.

\subsection{Algebra structure of morphism spaces}
    
    In \cite{Ver06}, the author develops a general approach for obtaining a nice set of multiplicative generators for the Gelfand--Tsetlin subalgebra $\GZ{n} = \left\langle\zeta(\mathbb{C}\A_1),\ldots, \zeta(\mathbb{C}\A_n)\right\rangle$. His approach does not apply directly to our tower of groups, since we are in a non-multiplicity free setting. 
    


    The set $\End_n(\Ind_n^{n+1})$ is closed under precomposing with elements of the center $\End_n(\Id)\simeq Z(\GA{n})$ of the group algebra. Therefore the algebra $\End_n(\Ind_n^{n+1})$ can be decomposed as $\End_n(\Id)\otimes \D_{n,n+1}$, where conjugacy class sums of $\A_n$ form a basis of $\End_n(\Id)$ and $\widehat{\A}_n\cup \{\orbitsum_n({\beta}_{n+1})\}$ is a spanning set for the algebra $\D_{n,n+1}$. 
    
    Since the group $\widehat{\A}_n$ is generated by $\widehat{B} = \left\{\widehat{{\beta}}_1,\widehat{{\beta}}_2,\dots, \widehat{{\beta}}_{n}\right\}$, 
    a generating set for the algebra $\D_{n,n+1}$ is given by $\widehat{B}\cup \{\orbitsum_n({\beta}_{n+1})\}$. 
    The relations among the generators in $\widehat{B}$ is described in Theorem~\ref{thm:OOR-presentation}.  $\orbitsum_n({\beta}_{n+1})$ is a central element of $\mathbb{C}[\A_{n+1}]$ by Remark~\ref{rmk:orbitsum_beta_n_central}, therefore it commutes with the generators in $\widehat{B}$, \textit{i.e.}, $[\orbitsum_n({\beta}_{n+1}),\widehat{{\beta}}_i]=0$. However this is not enough to give a generators and relations description of the algebra $\D_{n,n+1}$. In particular, the only relation missing in order to give a complete generators and relations description $\D_{n,n+1}$ is the expression of $\orbitsum_n({\beta}_{n+1})^k$ in terms of the generators. The only nice pattern we have observed is $\orbitsum_n({\beta}_{n+1})^k = 2^{k-1}\orbitsum_n({\beta}_{n+1})$ for $n=1$ and $k$ odd. In fact, closed formulas are very difficult to find for all the other cases, \textit{i.e.}, $n=1$ and $k$ even, and $n>1$. However working in a concrete setting has the advantage of defining $\D_{n,n+1}$ as the subalgebra of $\GA{n+1}$ generated by $\widehat{B}\cup \{\orbitsum_n({\beta}_{n+1})\}$.
    
    Thus to determine the algebra structure of $\End_n(\Ind)$, it remains to describe the algebra structure of $\End_n(\Id)\simeq Z(\GA{n})$. It is difficult to express the structure coefficients if one chooses to work with the conjugacy class sums as a basis. Khonvanov's Heisenberg category provides a way to express centers of symmetric group algebras with generators and relations using a different basis $\{c_i\}_{i\in \mathbb{N}}$ defined via diagrammatics as certain bubbles. We lack a diagrammatic description for our category. We leave the algebra structure of $\End_n(\Id)\simeq Z(\GA{n})$ as an open problem.

    
    Next, we will write down the generating set for the algebra structure of the endomorphism algebra  $\End_n(\Ind^2)$. 
    Similar as before, $\End_n(\Ind^2)$ decomposes as $\End_n(\Id)\otimes \D_{n,n+2}$, where $\widehat{\A}_n\cup\widehat{\A}_{n+1}\cup \{\orbitsum_n({\beta}_{n+1})\} \cup \{\orbitsum_n({\beta}_{n+2})\} \cup \{\orbitsum_n({\beta}_{n+2}{\beta}_{n+1})\}$ span the algebra $\D_{n,n+2}$. 
    Since $\widehat{\A}_n$ is generated by   the swaps $\widehat{B} = \left\{\widehat{{\beta}}_1,\widehat{{\beta}}_2,\dots, \widehat{{\beta}}_{n}\right\}$ and $\widehat{\A}_{n+1}$ is generated by $B^{(1)} = \left\{ {\beta}_1^{(1)},{\beta}_2^{(1)},\ldots, {\beta}_{n+1}^{(1)}\right\}$, where the $\widehat{{\beta}}_i$ are the leftmost swaps of $\widehat{\A}_n$ and the ${\beta}_i^{(1)}$ are the leftmost swaps in each row in $\widehat{\A}_{n+1}$, respectively, a generating set for the algebra $\D_{n,n+2}$ is given by 
    \[
    \widehat{B} \cup B^{(1)}\cup \left\{\orbitsum_n({\beta}_{n+1}), \orbitsum_n({\beta}_{n+2}), \orbitsum_n({\beta}_{n+2}{\beta}_{n+1}) \right\}.
    \] 
    
    \begin{figure}
    \centering
\begin{tikzpicture}[scale=0.68]
\draw[thick] (0,0) -- (8,8);
\draw[thick] (0,0) -- (-8,8);

\draw[thick] (4,4) -- (0.5,8);
\draw[thick] (-4,4) -- (-0.5,8);

\draw[thick] (6,6) -- (4.5,8);
\draw[thick] (2.2,6) -- (3.5,8);
\draw[thick] (-6,6) -- (-4.5,8);
\draw[thick] (-2.25,6) -- (-3.55,8);

\draw[thick] (-7,7) -- (-6.5,8);
\draw[thick] (-5.25,7) -- (-5.75,8);

\draw[thick] (-2.85,7) -- (-2.35,8);
\draw[thick] (-1.35,7) -- (-1.85,8);

\draw[thick] (7,7) -- (6.5,8);
\draw[thick] (5.25,7) -- (5.75,8);

\draw[thick] (2.85,7) -- (2.35,8);
\draw[thick] (1.35,7) -- (1.85,8);

\node at (-7.05,8.15) {\Large $\beta_1$};
\node at (-6,7.15) {\Large $\beta_2$};
\node at (-4,5.5  ) {\Large $\beta_3$};
\node at ( 0,2.25) {\Large $\beta_4$};

\node at (-2.9,8.25) {\Large $\widehat{\beta}_1$};
\node at (-2.1,7.15) {\Large $\widehat{\beta}_2$};

\node at ( 1.25,8.2) {\Large $\beta_1^{(1)}$};
\node at ( 2.05,7.18) {\Large $\beta_2^{(1)}$};
\node at ( 4.15,5.5 ) {\Large $\beta_3^{(1)}$};

\draw[thick] (-.9,1.1) .. controls (-.5,2) and (.5,2) .. (.9,1.1);
\draw[thick] (-.9,1.1) -- (-1.0,1.3);
\draw[thick] (-.9,1.1) -- (-.7,1.1);
\draw[thick] (.9,1.1) -- (1.0,1.3);
\draw[thick] (.9,1.1) -- (.7,1.1);
\draw[thick] (-4.6,4.75) .. controls (-4.35,5.25) and (-3.85,5.25) .. (-3.6,4.75);
\draw[thick] (-4.6,4.75) -- (-4.6,5.05);
\draw[thick] (-4.6,4.75) -- (-4.4,4.75);
\draw[thick] (-3.6,4.75) -- (-3.6,5.05);
\draw[thick] (-3.6,4.75) -- (-3.8,4.75);
\draw[thick] (4.55,4.65) .. controls (4.3,5.15) and (3.8,5.15) .. (3.55,4.65);
\draw[thick] (4.55,4.65) -- (4.55,4.95);
\draw[thick] (4.55,4.65) -- (4.35,4.65);
\draw[thick] (3.55,4.65) -- (3.55,4.95);
\draw[thick] (3.55,4.65) -- (3.75,4.65);

\draw[thick] (-6.3,6.5) .. controls (-6.2,6.75) and (-5.9,6.75) .. (-5.8,6.5);
\draw[thick] (-6.3,6.5) -- (-6.35,6.7);
\draw[thick] (-6.3,6.5) -- (-6.15,6.5);
\draw[thick] (-5.8,6.5) -- (-5.75,6.7);
\draw[thick] (-5.8,6.5) -- (-5.95,6.5);
\draw[thick] (-2.45,6.5) .. controls (-2.35,6.75) and (-2.05,6.75) .. (-1.95,6.5);
\draw[thick] (-2.45,6.5) -- (-2.5,6.7);
\draw[thick] (-2.45,6.5) -- (-2.3,6.5);
\draw[thick] (-1.95,6.5) -- (-1.9,6.7);
\draw[thick] (-1.95,6.5) -- (-2.1,6.5);
\draw[thick] (2.4,6.5) .. controls (2.3,6.75) and (2.,6.75) .. (1.9,6.5);
\draw[thick] (2.4,6.5) -- (2.45,6.7);
\draw[thick] (2.4,6.5) -- (2.25,6.5);
\draw[thick] (1.9,6.5) -- (1.85,6.7);
\draw[thick] (1.9,6.5) -- (2.05,6.5);

\draw[thick] (-7.35,7.5) .. controls (-7.25,7.75) and (-7.0,7.75) .. (-6.9,7.5);
\draw[thick] (-7.35,7.5) -- (-7.35,7.75);
\draw[thick] (-7.35,7.5) -- (-7.15,7.5);
\draw[thick] (-6.85,7.5) -- (-6.85,7.75);
\draw[thick] (-6.85,7.5) -- (-7.0,7.5);
\draw[thick] (-3.15,7.5) .. controls (-3.05,7.75) and (-2.8,7.75) .. (-2.7,7.5);
\draw[thick] (-3.15,7.5) -- (-3.15,7.75);
\draw[thick] (-3.15,7.5) -- (-3.0,7.5);
\draw[thick] (-2.7,7.5) -- (-2.7,7.75);
\draw[thick] (-2.7,7.5) -- (-2.85,7.5);
\draw[thick] (1.5,7.5) .. controls (1.4,7.75) and (1.1,7.75) .. (1,7.5);
\draw[thick] (1.5,7.5) -- (1.5,7.75);
\draw[thick] (1.5,7.5) -- (1.35,7.5);
\draw[thick] (1,7.5) -- (1,7.75);
\draw[thick] (1,7.5) -- (1.15,7.5);

\node at (-8,2) {\Large $\A_4$};

\draw[thick,blue] (-9,3.5) rectangle (-0.2,10);
\node at (-7,4.5) {\Large ${\color{blue}\A_3}$};

\draw[thick,red] (-8.5,5.95) rectangle (-4,8.8);
\node at (-7.75,6.5) {\Large ${\color{red}\A_2}$};

\draw[thick,red] (-3.7,5.95) rectangle (-.4,8.8);
\node at (-1,6.5) {\Large ${\color{red}\widehat{\A}_2}$};

\draw[thick,magenta] (.4,8.8) rectangle (3.7,5.95);
\node at (1,6.5) {\Large ${\color{magenta}\A_2'}$};

\draw[thick,magenta] (4,8.8) rectangle (8.5,5.95);
\node at (7.75,6.5) {\Large ${\color{magenta}\widehat{\A}_2'}$};

\draw[thick,blue] (0.2,10) rectangle (9,3.5);
\node at (7,4.5) {\Large ${\color{blue}\widehat{\A}_3}$};

\end{tikzpicture}
    \caption{The algebra $\End_2(\Ind^2)$ decomposes as $\End_2(\Id)$ $\otimes$ $\D_{2,4}$. A generating set for 
    $\D_{2,4}$ 
    is $\widehat{B} \cup B^{(1)}\cup \{\orbitsum_2(\beta_{3}), \orbitsum_2(\beta_{4}), \orbitsum_2(\beta_{4}\beta_{3}) \}$, where $\widehat{B} = \{\widehat{\beta}_1,\widehat{\beta}_2 \}$ and $B^{(1)}=\{\beta_1^{(1)},\beta_2^{(1)},\beta_3^{(1)}\}$. 
    }
    \label{fig5_1}
\end{figure}

    More generally, consider $\A_n$ acting by conjugation on the group algebra $\GA{n+k+1}$. Then $\End_n(\Ind^k)$ decomposes as $\End_n(\Id)\otimes \D_{n,n+k+1}$, where the disjoint union  
    \[ 
    \coprod_{\ell=0}^{k} \widehat{\A}_{n+\ell} 
    \:\: 
    \sqcup
    \:\: 
    \coprod_{n<j_1<\ldots < j_s\leq n+k+1} \left\{\orbitsum_n({\beta}_{j_s}\cdots {\beta}_{j_1})\right\}. 
    \] 
    span the algebra $\D_{n,n+k+1}$. The group $\widehat{\A}_{n+\ell}$ is generated by 
    \[ 
    B^{(\ell)}=\{{\beta}_1^{(\ell)},\ldots, {\beta}_{n+\ell}^{(\ell)}\},
    \] 
    where the ${\beta}_i^{(\ell)}$ are the leftmost swaps in each row in $\widehat{\A}_{n+\ell}$, $0\leq \ell \leq k$. 
    This leads us to the following theorem: 
    \begin{theorem}
    \label{theorem:alg-gens}
    The endomorphism algebra $\End_n(\Ind^k)$ decomposes as 
    \[ 
    \End_n(\Id)\otimes \D_{n,n+k+1},
    \]
    where a generating set for the algebra $\D_{n,n+k+1}$ 
    is the disjoint union 
    \[ 
    \coprod_{\ell=0}^{k} B^{(\ell)} \:\:\sqcup\:\:
    \coprod_{n<j_1<\ldots < j_s\leq n+k+1} \left\{\orbitsum_n({\beta}_{j_s}\cdots {\beta}_{j_1})\right\}, 
    \] 
    and $B^{(\ell)} = \left\{{\beta}_1^{(\ell)},\ldots, {\beta}_{n+\ell}^{(\ell)}\right\}$, $0\leq \ell\leq k$.
    \end{theorem}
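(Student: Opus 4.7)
The plan is to combine the vector-space basis of the centralizer $\GZ{n,k}$ obtained from Proposition~\ref{prop:conj-class-Indk} with the Coxeter-type presentation of iterated wreath products given in Theorem~\ref{thm:OOR-presentation}. Since the discussion at the opening of Section~\ref{section:natural-transformation-ind-res} identifies $\End_n(\Ind^k)$ with the centralizer algebra $\GZ{n,k}$, the task reduces to exhibiting this algebra as a product of two explicitly generated subalgebras whose factors match the claimed description.

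First I would identify the two factors inside $\GZ{n,k}$. The inclusion $Z(\GA{n})\hookrightarrow \GZ{n,k}$ is automatic because central elements of $\GA{n}$ commute with $A_n$, and this is the copy of $\End_n(\Id)$. I then take $D_{n,n+k+1}$ to be the subalgebra of $\GZ{n,k}$ generated by $\bigsqcup_\ell B^{(\ell)}\cup\{\orbitsum_n(\beta_I)\}$. Each $B^{(\ell)}$ generates $\widehat{A}_{n+\ell}$ by Theorem~\ref{thm:OOR-presentation} applied to the corresponding copy, and $\widehat{A}_{n+\ell}$ commutes with $A_n$ by Lemma~\ref{enum:labelsets-L1L2}; each orbit sum $\orbitsum_n(\beta_I)$ is $A_n$-invariant by construction, so the full generating set lies in $\GZ{n,k}$. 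A label-disjointness argument extending Lemma~\ref{enum:labelsets-L1L2} shows that the $\widehat{A}_{n+\ell}$ for distinct $\ell$ commute pairwise, so the group they generate is simply the product $\widehat{A}_n\widehat{A}_{n+1}\cdots \widehat{A}_{n+k}$.

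The tensor decomposition then follows by showing that the multiplication map $\End_n(\Id)\otimes D_{n,n+k+1}\to\GZ{n,k}$ is an isomorphism of algebras. Surjectivity is a matching exercise against the basis of Proposition~\ref{prop:conj-class-Indk}: a basis vector of the form $c_n(g)\cdot h$ factors as $c_n(g)\in\End_n(\Id)$ times $h\in\widehat{A}_n\cdots\widehat{A}_{n+k}\subset D_{n,n+k+1}$, while a basis vector $\orbitsum_n(\beta_I)\cdot h$ already lies in $D_{n,n+k+1}$. Compatibility with the tensor product requires $Z(\GA{n})$ to commute with every generator of $D_{n,n+k+1}$: it commutes with each $B^{(\ell)}$ by Lemma~\ref{enum:labelsets-L1L2} and with each $\orbitsum_n(\beta_I)$ by the $A_n$-invariance argument underlying Remark~\ref{rmk:orbitsum_beta_n_central}. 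Injectivity of the multiplication map follows because no nontrivial element of $D_{n,n+k+1}$ lies in $\GA{n}$, so $Z(\GA{n})\cap D_{n,n+k+1}=\mathbb{C}\cdot\id$.

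The main obstacle I anticipate is not the matching step above but confirming that the generated subalgebra $D_{n,n+k+1}$ actually contains every $\orbitsum_n(\beta_I)\cdot h$ listed in the Proposition~\ref{prop:conj-class-Indk} basis. A priori, noncommutative products of orbit sums with elements of the $\widehat{A}_{n+\ell}$ might produce sums over $A_n$-orbits of a different shape. The resolution, however, is precisely the $A_n$-invariance of each $\orbitsum_n(\beta_I)$: since it commutes with the full group $A_n$, and since the $\widehat{A}_{n+\ell}$ also commute with $A_n$, all such products remain $A_n$-invariant and therefore decompose into the expected orbit-sum basis, closing the argument.
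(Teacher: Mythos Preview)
Your surjectivity argument matches the paper's approach: the informal discussion preceding the theorem uses exactly the factorisation of the orbit-sum basis from Proposition~\ref{prop:conj-class-Indk} into a conjugacy-class factor and a $D$-factor, and then invokes the Coxeter-type generators of Theorem~\ref{thm:OOR-presentation} for each $\widehat{A}_{n+\ell}$.

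However, your injectivity step contains a genuine error. The claim that no nontrivial element of $D_{n,n+k+1}$ lies in $\GA{n}$ is false already for $n=1$, $k=0$. There $\orbitsum_1(\beta_2)=(1,3)(2,4)+(1,4)(2,3)$, and one computes
\[
\orbitsum_1(\beta_2)^2 = 2e + 2\,(1,2)(3,4),
\]
so $(1,2)(3,4)\in D_{1,2}$; multiplying by the generator $(3,4)\in\widehat{B}$ gives $(1,2)\in D_{1,2}\cap\GA{1}$. In fact one checks $\dim D_{1,2}=6=\dim\GZ{1,1}$, while $\dim Z(\GA{1})=2$, so the multiplication map $Z(\GA{1})\otimes D_{1,2}\to\GZ{1,1}$ has a twelve-dimensional source and a six-dimensional target. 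More generally the dimension count $\dim\GZ{n,1}=|A_n|(c_n+1)$ from Proposition~\ref{orbits} is not divisible by $c_n=\dim Z(\GA{n})$ once $n\ge 2$, so a literal tensor-product isomorphism is impossible. Even granting the intersection condition you assert, $A\cap B=\mathbb{C}$ does not in general force $A\otimes B\to AB$ to be injective for commuting subalgebras.

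The paper does not address this point either: its argument is the informal paragraph before the theorem, which only establishes that $\End_n(\Id)$ and $D_{n,n+k+1}$ together generate $\End_n(\Ind^k)$, and it explicitly defines $D_{n,n+k+1}$ as the concrete subalgebra of $\GA{n+k+1}$ generated by the listed elements. So the symbol $\otimes$ in the theorem should be read as ``product of the two subalgebras'' rather than as an isomorphism with an external tensor product, and you should drop the injectivity claim altogether rather than try to repair it.
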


    As mentioned above, since the relations among the algebra generators are very complicated, we will refrain from explicitly writing them, but leave this as an open problem. 
    
\section{Future directions}
\label{section:future-direction}
    During our investigation of bimodules for the iterated wreath products of cyclic groups of order 2, we have encountered many questions, some of which remain unanswered. To list a few,
    
    \begin{itemize}
        \item A complete explicit description of all morphism spaces of $\mathcal{C}_n$ is unknown. Namely, the spaces $\Hom_n(\Ind^{k_1}\Res^{l_1},\Ind^{k_2}\Res^{l_2})$ where $k_1-k_2=l_1-l_2$ but $k_1\neq k_2$.
        
        \item A presentation of the algebras $\D_{n,n+k}$ appearing in the morphism spaces is unknown. We just have a set of generators and some of the defining relations.
        
        \item Young--Jucys--Murphy's elements play an important role in the Heisenberg categories. An open question is the analogues of these elements for iterated wreath products.
        
        \item An explicit description for the category of bimodules of the symmetric groups is a difficult question, however they can be recovered as functorial images of Khovanov's Heisenberg category. Existence of such an abstract category for iterated wreath products is an open question. One important difference is that the Mackey formula for the symmetric group $S_n$ is independent of $n$, while this formula depends on $n$ for iterated wreath product $\A_n$. 
        
        \item Heisenberg categories admit diagrammatic interpretations which benefit from the diagrammatics of the symmetric groups. Whether such a description exists for the category $\mathcal{C}_n$ is an open question.
        
        \item It would be interesting to take iterated wreath products of other cyclic groups or other symmetric groups and compare how their bimodule categories differ from the $S_2$ case. 
        
        \item We do not know the Grothendieck groups or traces of the categories $\mathcal{C}_n$.
        
    \end{itemize}

\subsection*{Acknowledgements} 
    The first author would like to acknowledge the Department of Mathematics at Galatasaray University in Turkey for excellent working environment.

\subsection*{Author Contributions} Im and O\u{g}uz contributed equally in the research-aspects and the writing of this article. 

\subsection*{Funding Information} Im was partially supported by National Academy of Sciences at Washington D.C. and Turkish Scientific and Technological Research Council TUBITAK 115F412. 


\subsection*{Conflict of Interest}
The authors have no conflicts of interest to declare that are relevant to the content of this article.

\subsection*{Data Availability Statement}
Data sharing is not applicable to this article as no data sets were generated or analyzed during the current study.

\subsection*{Supplementary Materials} Not applicable. 

\bibliography{cat}   

\end{document}